\documentclass{amsart}

\usepackage{preamble}
\usepackage{macros}

\title{Norms and Hermitian $\mathrm{K}$-Theory}
\author{Brian Shin}
\date{\today}

\begin{document}

\begin{abstract}
    Over the past century, cohomology operations have played a crucial role in homotopy theory and its applications.
    A powerful framework for constructing such operations is the theory of commutative algebras in spectra.
    In this article, we discuss an algebro-geometric analogue of this framework, called the theory of \emph{normed algebras} in motivic spectra.
    Specifically, we show that the motivic spectrum $\mathrm{ko}$ representing very effective hermitian $\mathrm{K}$-theory can be equipped with a normed algebra structure, and that the orientation map $\mathrm{MSL} \to \mathrm{ko}$ respects this structure.
    The main step will be showing that the motivic infinite loop space machine is compatible with norms.
\end{abstract}

\maketitle

\setcounter{tocdepth}{1}
\tableofcontents

\section{Introduction} \label{section:intro-intro}

Multiplicative cohomology theories occupy a central role in classical homotopy theory.
In favorable circumstances, the spectra that represent multiplicative cohomology theories come with a highly structured multiplication known as a commutative (\emph{i.e.}, $\mathbb{E}_\infty$) algebra structure.
At the level of cohomology theories, this structure manifests in the form of power operations.
For example, the spectrum $\mathrm{H}\mathbb{F}_p$ representing ordinary cohomology with $\mathbb{F}_p$-coefficients admits a commutative algebra structure, and the corresponding power operations can be recognized as the Steenrod operations.
These operations have been fundamental to the study and application of homotopy theory since the very beginnings of the subject.

In motivic homotopy theory, the role played by commutative algebras in spectra is taken up by \emph{normed algebras} in motivic spectra as defined by Bachmann--Hoyois in \cite{BachmannHoyois_Norms}.
These are motivic spectra equipped with multiplications that are ``more commutative than commutative'' in a sense to be made precise later in this article. (See Section~\ref{section:normed-stuff}.)
Normed algebras support a theory of power operations which is significantly richer than the analogous theory coming from commutative algebras.
In particular, the resulting theory is rich enough to recover Voevodsky's motivic Steenrod operations (see \cite[Example 7.25]{BachmannHoyois_Norms}).
These power operations were crucial in the resolution (by Voevodsky, Rost, and others) of the Milnor and Bloch--Kato conjectures relating motivic cohomology to étale cohomology.

The goal of the present article is to discuss the following theorem.
Here, we write $\mathrm{ko}_S$ for the motivic spectrum representing \emph{very effective hermitian $\mathrm{K}$-theory}.
This is the motivic analogue of the ordinary spectrum $\mathrm{ko}$ representing connective real topological $\mathrm{K}$-theory.
In particular, it is an example of a cohomology theory which \emph{does not} admit a theory of Chern classes, and can therefore be used to detect interesting phenomena in motivic homotopy theory.

\begin{theorem}[\cite{Shin_NormsTransfers}]\label{theorem:main}
    Let $S$ be a quasi-compact quasi-separated scheme.
    The motivic spectrum $\mathrm{ko}_S \in \SH(S)$ admits a normed algebra structure.
    In fact, the diagram
    \begin{equation*}
        \begin{tikzcd}
            \mathrm{MSL}_S \ar[d] \ar[r] & \mathrm{ko}_S \ar[d] \\
            \mathrm{MGL}_S \ar[r] & \mathrm{kgl}_S
        \end{tikzcd}
    \end{equation*}
    can be refined to one of normed algebras in $\SH(S)$.
\end{theorem}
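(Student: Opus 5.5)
The strategy is to realize $\mathrm{ko}_S$ and $\mathrm{kgl}_S$ as images of normed objects under a functor that is compatible with norms, namely the motivic infinite loop space machine. Concretely, I would use the motivic recognition principle of Elmanto--Hoyois--Khan--Sosnilo--Yakerson: there is an equivalence between very effective motivic spectra and grouplike motivic spaces with framed transfers, $\Sigma^\infty_{\mathrm{fr}}$-type functors relating $\mathcal{H}^{\mathrm{fr}}(S)$ and $\SH(S)^{\mathrm{veff}}$. Moreover, $\mathrm{kgl}_S$ is obtained by applying the machine to the framed motivic space $\mathrm{Vect}$, and $\mathrm{MGL}_S$ to the space of finite syntomic schemes (Hoyois et al.). Analogously, $\mathrm{ko}_S$ should be the machine applied to $\mathrm{Vect}^{\mathrm{sym}}$ (nondegenerate symmetric bilinear bundles), and $\mathrm{MSL}_S$ to finite syntomic schemes with a trivialization of the canonical sheaf (or an oriented variant), at least over a base where $2$ is invertible. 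For general qcqs $S$ I would take the relevant objects as defined by pullback from $\mathbb{Z}$ (or $\mathbb{Z}[1/2]$ via a descent argument), since norms are stable under base change.

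The key steps, in order, are as follows.

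\begin{enumerate}
\item Show that the infinite loop space machine $\mathcal{H}^{\mathrm{fr}}(S) \to \SH(S)$ extends to a natural transformation of functors on the category of spans of schemes with finite \'etale backward maps. In other words, it commutes with the multiplicative norms $f_\otimes$ for finite \'etale $f$. Granting this, normed monoids in framed motivic spaces go to normed algebras in $\SH$, and maps go to maps.
\item Equip $\mathrm{Vect}^{\mathrm{sym}}$, $\mathrm{Vect}$, and the syntomic/SL-oriented syntomic moduli with normed structures in framed spaces. The norm along finite \'etale $f\colon T \to S$ should be the Weil restriction composed with the determinant-style norm of bundles $E \mapsto f_\otimes E$. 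For symmetric forms the tensor-induced form is again nondegenerate. For syntomic schemes it is Weil restriction of finite flat schemes.
\item Check that the forgetful map $\mathrm{Vect}^{\mathrm{sym}} \to \mathrm{Vect}$ and the map from SL-oriented syntomic schemes to all syntomic schemes are natural in spans. Also check that the maps syntomic $\to \mathrm{Vect}$ (taking $Z\mapsto \mathcal{O}_Z$ pushed forward) and oriented syntomic $\to \mathrm{Vect}^{\mathrm{sym}}$ (using the trace form twisted by the orientation, which is nondegenerate by Grothendieck duality) are natural in spans as well, yielding a commuting square of normed framed spaces.
\item Group-complete and apply step 1, then identify the outputs with $\mathrm{MSL}_S,\mathrm{ko}_S,\mathrm{MGL}_S,\mathrm{kgl}_S$ via the known recognition results, using that the identifications of underlying $\mathbb{E}_\infty$ maps agree.
\end{enumerate}

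The main obstacle is step 1. The machine is built from $\Sigma^\infty_{\mathrm{fr}}$ and framed correspondences, and norms of framed correspondences are not evidently defined. My first attempt would replace framed correspondences with finite syntomic (or ``tangentially framed'') correspondences, whose Weil restrictions along finite \'etale maps behave well. I would then show the machine factors through the $\infty$-category of motivic spaces with these transfers, and that $\Sigma^\infty$ out of it is symmetric monoidal and commutes with $f_\otimes$ via a base-change/distributivity check on generators $h(X)$ for smooth $X$, extended by sifted colimits since norms preserve them. A secondary subtlety is the ``$\mathbb{A}^1$-invariance'' of $\mathrm{Vect}^{\mathrm{sym}}$ outside characteristic-free settings. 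I would handle it by working with the motivic localization throughout, so only the localized object needs to be correct.
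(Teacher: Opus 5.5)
Your architecture is the paper's: make $\mathrm{B}^\infty_\mathbb{P}\mathrm{L}_\mot$ compatible with finite étale norms, put normed structures on the square $\FSyn^{\mathrm{or}}_S \to \Vect^\sym_S$, $\FSyn_S \to \Vect_S$ of framed presheaves, and apply the machine. The problem is how you propose to carry out step~1.

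The first option you float, finite syntomic correspondences, cannot work for the two objects that are actually new here. The transfers on $\Vect^\sym_S$ and $\FSyn^{\mathrm{or}}_S$ depend on the framing through $\det\alpha$, which trivializes $\omega_{\widetilde{X}/X}$ and twists the trace form. So these presheaves have no finite syntomic transfers. Indeed, every presheaf on $\Corr^{\mathrm{fsyn}}(\Sm_S)$ restricts to a module over $\FSyn_S$ in $\PSh(\Corr^\fr(\Sm_S))$. The machine would then make $\mathrm{ko}_S$ and $\mathrm{MSL}_S$ into $\mathrm{MGL}_S$-modules, on which $\eta$ acts by zero. This already fails over a field of characteristic not $2$, where $\mathrm{KO}[\eta^{-1}]$ is Witt theory.

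So you must keep tangential framings and construct the norm monoidal structure $\Corr^\fr(\Sm_{({-})}) : \Corr^\fet(\Sm_S)^\op \to \Cat_\infty$. That is precisely the main technical content, and ``Weil restrictions behave well'' passes over it. From a framing $\alpha : [\mathrm{L}_p] \simeq 0$ in $\mathrm{K}$ you need a framing of the Weil-restricted map. This must be coherent with composition of correspondences (where framings add) and with composition and base change of norms. Since $\mathrm{K}$ has no étale descent, Slogan~\ref{slogan:norms-by-descent} does not apply to it directly. The paper instead norms the cotangent complex, which does have étale descent, and uses labeling functors to transport the $\mathrm{K}$-theoretic framing data. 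Once that is in place, your check on generators and sifted colimits is essentially how Theorem~\ref{theorem:recognition-is-normed}(1) follows.

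In step~2 you are missing the idea that makes the normed structures free. The presheaves $\Vect_S$, $\Vect^\sym_S$, $\FSyn_S$ and $\FSyn^{\mathrm{or}}_S$ all satisfy étale descent. By Slogan~\ref{slogan:norms-by-descent} (precisely \cite[Corollary C.16]{BachmannHoyois_Norms}), the commutative algebra square in $\PSh(\Corr^\fr(\Sm_S))$ therefore upgrades automatically to a square of normed algebras. Your hand-built norms (Weil restriction plus tensor induction of bundles) only specify the norm maps on objects. A normed algebra is a section over all of $\Corr^\fet(\Sm_S)^\op$, so you would also owe every higher coherence, including compatibility with framed transfers and with the maps of the square. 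Descent supplies all of this.

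Finally, your reduction for general $S$ does not work as stated. There is no descent from $\mathbb{Z}[1/2]$ to $\mathbb{Z}$, and pulling back from $\mathbb{Z}$ needs the same input as arguing over $S$ directly. What is needed is the identification $\mathrm{ko}_S \simeq \mathrm{B}^\infty_\mathbb{P}\mathrm{L}_\mot \Vect^\sym_S$ as commutative algebras for every $S$. This holds without inverting $2$ by \cite[Remark~7.3]{HJNY_KO} and \cite{CHN}. The argument then runs over $S$ itself, with no group completion and no use of the recognition equivalence, which requires a perfect field.
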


\subsection*{Outline}

We begin in Section~\ref{section:intr-to-homotopy} with a rapid overview of motivic homotopy theory.
To motivate the theory of normed algebras, we take a brief detour in Section~\ref{section:commutative-stuff} to reframe the theory of commutative (\emph{i.e.}, $\mathbb{E}_\infty$) algebras.
Following \cite{BachmannHoyois_Norms}, we dive into the theory of normed algebras in Section~\ref{section:normed-stuff}; there, we indicate in particular the main difficulty (Tension~\ref{tension:no-easy-norms}) in proving Theorem~\ref{theorem:main}.
Our primary tool in overcoming Tension~\ref{tension:no-easy-norms} is the theory of motivic infinite loop spaces from \cite{EHKSY_deloop1}, which we review in the Section~\ref{section:loops}.
Section~\ref{section:end-game} is dedicated to proving Theorem~\ref{theorem:main}.

\subsection*{Conventions}
Throughout, all schemes are assumed quasi-compact and quasi-separated over $\mathbb{Z}$, and all smooth morphisms are assumed finitely presented.
Given a scheme $S$, we'll write $\Sm_S$ for the category of smooth $S$-schemes.
We will use the framework of $\infty$-categories.
We will refer to $\infty$-groupoids as anima, and write $\Ani$ for the $\infty$-category of anima.

\subsection*{Acknowledgements}
We thank the organizers of the 2024 session of the IAS/Park City Mathematics Institute for putting together an incredible experience.
We are particularly grateful to Marc Levine for inviting us to give a talk at that session and contribute this article.
We extend a hearty thanks to Tom Bachmann, Elden Elmanto, Jeremiah Heller, Marc Hoyois, and Denis Nardin for the invaluable conversations related to the work discussed here.
We also thank the anonymous referee, whose comments have greatly improved the exposition.

\section{A Brief Tour of Motivic Homotopy Theory} \label{section:intr-to-homotopy}

\begin{notation}
    Let $S$ be a scheme, which is fixed for the rest of the article.
\end{notation}

In short, the idea behind motivic homotopy theory is that it is homotopy theory where smooth manifolds are replaced by smooth $S$-schemes.
The role played by the real line $\mathbb{R}$ is then taken up by the affine line $\mathbb{A}^1_S$.

\begin{definition} \label{definition:motivic-spaces}
    A \emph{motivic space} over $S$ is a Nisnevich sheaf
    \begin{equation*}
        F : \Sm_S^\op \to \Ani
    \end{equation*}
    such that $F(X) \xrightarrow{\simeq} F(X \times_S \mathbb{A}^1_S)$ for all $X \in \Sm_S$.
    We'll write $\HH(S) \subseteq \PSh(\Sm_S)$ for the full subcategory spanned by motivic spaces over $S$.
\end{definition}

The inclusion $\HH(S) \subseteq \PSh(\Sm_S)$ admits an accessible left adjoint $\mathrm{L}_\mathrm{mot}$ called \emph{motivic localization}.
In particular, the $\infty$-category $\HH(S)$ is presentable, and any presheaf $F : \Sm_S^\op \to \Ani$, be it constant or representable, gives rise to a motivic space $\mathrm{L}_\mathrm{mot} F \in \HH(S)$.
We will drop $\mathrm{L}_\mathrm{mot}$ from the notation when it is clear from context.

\begin{remark}
    If one takes the above definition and replaces $\Sm_S$ with the category of smooth manifolds, the Nisnevich topology with the open cover topology, and $\mathbb{A}^1_S$ with $\mathbb{R}$, then one recovers the usual $\infty$-category $\Ani$ of anima.
\end{remark}

\begin{remark}
    The Nisnevich topology sits between the Zariski and the étale topologies, which may be more familiar to the algebro-geometrically inclined reader.
    In principle, one could define a version of motivic spaces for any topology one wishes.
    However, the Nisnevich topology stands out for several reasons.
    For one, it is the coarsest topology for which Morel--Voevodsky's localization theorem \cite[Section~3,~Theorem~2.21]{MV_A1} (also known as gluing) holds.
    This is a vast generalization of the exact sequence
    \begin{equation*}
        \mathrm{CH}_n(Z) \xrightarrow{i_*} \mathrm{CH}_n(X) \to \mathrm{CH}_n(X \smallsetminus Z) \to 0
    \end{equation*}
    of Chow groups, where $i : Z \to X$ is a closed immersion of varieties.
    The localization theorem is a foundational result of Morel--Voevodsky's theory and forms one of the key ingredients of the six functor formalism in this setting.
    See \cite{Ayoub_6ff}, \cite{CisinskiDeglise_TriangulatedMixedMotives}, and \cite{Hoyois_6ff} for more on this.

    Another reason to use the Nisnevich topology is that it is the finest topology for which algebraic $\mathrm{K}$-theory satisfies descent.
    We stress in particular that the étale topology is often \emph{too fine} for the purposes of motivic homotopy theory.
    This point will come back to haunt us later (see Tension~\ref{tension:no-easy-norms}).
\end{remark}

The $\infty$-category $\HH(S)_*$ of \emph{pointed} motivic spaces is also presentable.
Better yet, we have a symmetric monoidal structure on $\HH(S)_*$ called the \emph{smash product} that is uniquely characterized by the fact that (1) it preserves colimits in each variable, and (2) $X_+ \otimes Y_+ \cong (X \times_S Y)_+$ for all $X, Y \in \HH(S)$.

\begin{exercise}
    Prove that there is an equivalence $S^1 \otimes (\mathbb{A}^1_S \smallsetminus 0) \cong \mathbb{P}^1_S$ in $\HH(S)_*$.
\end{exercise}

\begin{definition}
    For $X \in \mathscr{H}(S)_*$, we define $\Omega_\mathbb{P} X = \Hom_*(\mathbb{P}^1_S, X)$.
    This is the \emph{$\mathbb{P}^1$-loop space} of $X$.
\end{definition}

\begin{definition}
    A \emph{motivic spectrum} $E$ over $S$ consists of
    \begin{itemize}
        \item a sequence $E_0, E_1, E_2, \ldots \in \HH(S)_*$ of pointed motivic spaces, together with
        \item a sequence $E_i \xrightarrow{\simeq} \Omega_\mathbb{P} E_{i+1}, i \in \mathbb{N},$ of equivalences.
    \end{itemize}
    We refer to $E_0$ as the \emph{motivic infinite loop space} of $E$, and denote it by $\Omega^\infty_\mathbb{P} E$.
    Motivic spectra over $S$ can be organized into an $\infty$-category denoted $\SH(S)$.
\end{definition}

The functor $\Omega^\infty_\mathbb{P} : \SH(S) \to \HH(S)_*$ admits a left adjoint $\Sigma^\infty_\mathbb{P} : \HH(S)_* \to \SH(S)$.
In fact, the functor $\Sigma^\infty_\mathbb{P}$ exhibits $\SH(S)$ as the presentably symmetric monoidal $\infty$-category obtained from $\HH(S)_*$ by inverting $\mathbb{P}^1_S$ under smash product.
See \cite[Section~2.1]{Robalo_KTheoryBridge} and \cite[Section~6.1]{Hoyois_6ff} for more details on inverting objects in presentably symmetric monoidal $\infty$-categories.
Note that the equivalence $\mathbb{P}^1_S \cong S^1 \otimes (\mathbb{A}^1_S \smallsetminus 0)$ in $\HH(S)_*$ implies that $\SH(S)$ is stable.

One of the original motivations for developing motivic homotopy theory was to create a framework in which the algebraic $\mathrm{K}$-theory of smooth varieties could be represented by an algebro-geometric version of a homotopy-theoretic spectrum.
Indeed, this is accomplished as follows.

\begin{example}
    For $X \in \Sch$, we write $\Vect(X)$ for the anima of vector bundles on $X$.
    This is a commutative monoid under the operation of direct sum.
    We define $\mathrm{K} : \Sch^\op \to \Ani_*$ to be the Zariski sheafification of the presheaf $X \mapsto \Vect(X)^\mathrm{gp}$, where $({-})^\mathrm{gp}$ denotes the group completion functor.
    See \cite{GGN} for more on group completion.
    The anima $\mathrm{K}(X)$ is called the \emph{algebraic $\mathrm{K}$-theory anima} of $X$.

    We'll define now a motivic spectrum $\mathrm{KGL}_S \in \SH(S)$ that represents (the motivic localization of) the algebraic $\mathrm{K}$-theory of smooth $S$-schemes.
    To simplify the exposition, we'll focus on the case when $S$ is regular. (See Remark~\ref{remark:irregular} for the general case.)
    Let $\mathrm{K}_S : \Sm_S^\op \to \Ani_*$ to be the restriction of $\mathrm{K}$ along $\Sm_S \to \Sch$.
    Regularity of $S$ implies that $\mathrm{K}_S$ is a (pointed) motivic space over $S$.
    The tautological line bundle $\mathcal{O}(1)$ on $\mathbb{P}^1_S$ gives a canonical element $\beta \in (\Omega_\mathbb{P} \mathrm{K}_S)(S)$ called the \emph{Bott element}.
    Using the projective bundle formula, one finds that multiplication by $\beta$ induces an equivalence $\mathrm{K}_S \xrightarrow{\simeq} \Omega_\mathbb{P} \mathrm{K}_S$.
    Thus, we can define a periodic motivic spectrum
    \begin{equation*}
        \mathrm{KGL}_S = (\mathrm{K}_S, \mathrm{K}_S, \ldots) \in \SH(S).
    \end{equation*}
    This is the \emph{algebraic $\mathrm{K}$-theory spectrum}.
\end{example}

An important variant of algebraic $\mathrm{K}$-theory is \emph{Grothendieck--Witt theory}, also known as \emph{hermitian $\mathrm{K}$-theory}.
This is a version of algebraic $\mathrm{K}$-theory in which one studies vector bundles equipped with a non-degenerate symmetric bilinear form.
We refer to \cite{Hornbostel_Representability}, \cite{Schlichting_HermitianK}, \cite{HJNY_KO}, and \cite{CHN} for more Grothendieck--Witt theory.
Our exposition here follows closely that of \cite[Section~7]{HJNY_KO}.

\begin{example}
    For $X \in \Sch$, we write $\Vect^\sym(X)$ for the anima of vector bundles on $X$ equipped with a non-degenerate symmetric bilinear form.
    This is a commutative monoid under the operation of orthogonal direct sum.
    We define $\mathrm{GW}^s : \Sch^\op \to \Ani_*$ to be the Zariski sheafification of the presheaf $X \mapsto \Vect^\sym(X)^\mathrm{gp}$.
    The anima $\mathrm{GW}^s(X)$ is called the \emph{symmetric Grothendieck--Witt theory anima} of $X$.

    We'll define now a motivic spectrum $\mathrm{KO}_S \in \SH(S)$ that represents (the motivic localization of) the symmetric Grothendieck--Witt theory of smooth $S$-schemes.
    Here, we focus on the case when $S$ is regular with $2 \in \mathcal{O}(S)^\times$.
    (See Remark~\ref{remark:irregular} and Remark~\ref{remark:too} for the general case.)
    Let $\mathrm{GW}^s_S : \Sm_S^\op \to \Ani_*$ be the restriction of $\mathrm{GW}^s$ along $\Sm_S \to \Sch$.
    Using regularity of $S$, one can show that $\mathrm{GW}^s_S$ is a (pointed) motivic space over $S$.
    There is a canonical element $\tilde{\beta} \in (\Omega^4_\mathbb{P}\mathrm{GW}^s_S)(S)$ called the \emph{hermitian Bott element}.
    Multiplication by $\tilde{\beta}$ induces an equivalence $\mathrm{GW}^s_S \xrightarrow{\simeq} \Omega^4_\mathbb{P} \mathrm{GW}^s_S$.
    Thus, we can define a 4-periodic motivic spectrum
    \begin{equation*}
        \mathrm{KO}_S = (\mathrm{L}_\mot \mathrm{GW}^s_S, \Omega^3_\mathbb{P} \mathrm{L}_\mot \mathrm{GW}^s_S, \ldots, \mathrm{L}_\mot \mathrm{GW}^s_S, \Omega^3_\mathbb{P} \mathrm{L}_\mot \mathrm{GW}^s_S, \ldots) \in \SH(S)
    \end{equation*}
    where every fourth space is $\mathrm{L}_\mot \mathrm{GW}^s_S$.
    This is the \emph{hermitian $\mathrm{K}$-theory spectrum}.
\end{example}

\begin{remark} \label{remark:irregular}
    When $S$ is not assumed regular, the presheaves $\mathrm{K}_S$ and $\mathrm{GW}_S^s$ can fail to be motivic spaces.
    Nevertheless, one can define motivic spectra $\mathrm{KGL}_S$ and $\mathrm{KO}_S$ by applying $\mathrm{L}_\mot$ everywhere and making appropriate modifications.
    See \cite[Section~7]{HJNY_KO} for details in the case of $\mathrm{KO}_S$.
    For emphasis, the resulting motivic spectra are often referred to as the \emph{homotopy} algebraic $\mathrm{K}$-theory and the \emph{homotopy} hermitian $\mathrm{K}$-theory spectra.
\end{remark}

\begin{remark} \label{remark:too}
    In \cite{Hornbostel_Representability}, \cite{Schlichting_HermitianK}, and \cite{HJNY_KO}, one finds a pervasive assumption that $2$ is invertible on the base scheme.
    Indeed, the theory of symmetric forms is far more intricate when $2$ is not invertible.
    In recent work \cite{Nine_One} of Calmès--Dotto--Harpaz--Hebestreit--Land--Moi--Nardin--Nikolaus--Steimle, the picture without this assumption has become much sharper.
    In particular, we are now able to drop this assumption entirely.
    See \cite[Remark~7.3]{HJNY_KO} and \cite{CHN}.
\end{remark}

\begin{remark} \label{remark:some-objects-of-SH}
    One can view the motivic spectra $\mathrm{KGL}_S$ and $\mathrm{KO}_S$ as being motivic analogues of the topological $\mathrm{K}$-theory spectra $\mathrm{KU}$ and $\mathrm{KO}$ from ordinary homotopy theory.
    In fact, many spectra from ordinary homotopy theory admit analogues in the motivic world.\footnote{
        The precise relationship between these motivic spectra and their ``ordinary'' counterparts is that, when $S$ is the spectrum of a subfield of $\mathbb{C}$, there is a functor $r : \SH(S) \to \Spt$ known as \emph{complex Betti realization} which send the former to the latter.
    }
    The table below contains a short list of examples with ordinary spectra on the left and their motivic analogues on the right.

    \begin{table}[h]
        \centering
        \rowcolors{2}{gray!5}{gray!15}
        \begin{tabular}{c l c l}
            $\Spt$ & & $\SH(S)$ \\
            $\mathrm{H}\mathbb{Z}$ & ordinary cohomology & $\mathrm{H}\mathbb{Z}_S$ & motivic cohomology \\
            $\mathrm{KU}$ & complex topological $\mathrm{K}$-theory & $\mathrm{KGL}_S$ & algebraic $\mathrm{K}$-theory \\
            $\mathrm{KO}$ & real topological $\mathrm{K}$-theory & $\mathrm{KO}_S$ & hermitian $\mathrm{K}$-theory \\
            $\mathrm{MU}$ & complex cobordism & $\mathrm{MGL}_S$ & algebraic cobordism \\
            $\mathrm{MSU}$ & oriented complex cobordism & $\mathrm{MSL}_S$ & oriented algebraic cobordism
        \end{tabular}
    \end{table}

    We refer the reader to \cite{BEM} for more on motivic cohomology,\footnote{In particular, we are taking $\mathrm{H}\mathbb{Z}_S$ to mean the zero slice of $\mathrm{KGL}_S$ in this article.} and \cite[Section~16]{BachmannHoyois_Norms} for more on the cobordism spectra.
\end{remark}

With an eye towards infinite loop space recognition, we introduce the following, which is a motivic analogue of connectivity for ordinary spectra.

\begin{definition}
    A motivic spectrum over $S$ is \emph{very effective} if it is in the full subcategory $\SH(S)^\mathrm{veff} \subseteq \SH(S)$ generated under colimits and extensions by $\Sigma^\infty_\mathbb{P} X_+$ with $X \in \Sm_S$.\footnote{
        There are in fact several notions in motivic homotopy theory that could be thought of as analogues of connectivity.
        Notable examples include \emph{homotopy connectivity} and \emph{effectivity}.
        See \cite{Bachmann_GeneralizedSlices} and \cite[Appendix~B]{BachmannHoyois_Norms} for more on these.
        Each notion has its uses, but very effectiveness is the only one that will play a role in this article.
    }
\end{definition}

By \cite[Proposition 1.4.4.11]{Lurie_HA}, the inclusion $\SH(S)^\mathrm{veff} \subseteq \SH(S)$ admits a right adjoint which sends a motivic spectrum to its \emph{very effective cover}.
We write $\mathrm{kgl}_S$ and $\mathrm{ko}_S$ for the very effective covers of $\mathrm{KGL}_S$ and $\mathrm{KO}_S$.
The motivic spectra $\mathrm{H}\mathbb{Z}_S$, $\mathrm{MGL}_S$, and $\mathrm{MSL}_S$ are already very effective.

Since $\SH(S)$ is stable and presentably symmetric monoidal, it provides an excellent setting for doing higher algebra.
For example, all of the motivic spectra introduced so far are actually commutative algebras in $\SH(S)$, and consequently the generalized cohomology theories they represent are multiplicative.

\begin{remark}\label{remark:SH-is-CMon}
    Note that the assignment $S \mapsto \SH(S)$ can actually be made into a functor $\SH : \Sch^\op \to \CMon(\Cat_\infty)$.
    If $f : S \to T$ is a morphism of schemes, the pullback functor $f^* : \SH(T) \to \SH(S)$ is characterized by the fact that (1) it is symmetric monoidal, (2) it preserves all colimits, and (3) we have $f^* (\Sigma^\infty_\mathbb{P} Y_+) \cong \Sigma^\infty_\mathbb{P} (Y_S)_+$ for all $Y \in \Sm_T$.
    Moreover, we have $\SH(S_1 \sqcup S_2) \cong \SH(S_1) \times \SH(S_2)$ for all $S_1, S_2 \in \Sch$.
\end{remark}

\section{Commutative Monoids and Commutative Algebras via Correspondences} \label{section:commutative-stuff}

As a warmup to the theory of norm monoidal structures, let us meditate on a more basic situation.
Let $F : \Sm_S^\op \to \Ani$ be a presheaf.
What does it mean to give $F$ the structure of a commutative monoid?
More precisely, we contemplate the following:

\begin{question}
    What does it mean to lift $F$ along the forgetful functor $\CMon(\Ani) \to \Ani$?
\end{question}

Such a lift consists of a lot of data.
For example, we'd have, for every $X \in \Sm_S$ and every natural number $n$, a map of anima
\begin{equation*}
    \mu_n : F(X)^{\times n} \to F(X)
\end{equation*}
given by $n$-ary multiplication.
The rest of the data would encode coherences such as unitality, associativity, and commutativity of the multiplication.

If $F$ preserves finite products (for instance, we could have that $F$ is a sheaf for the Zariski topology) then we can rewrite the $n$-ary multiplication as
\begin{equation*}
    F\bigl(X^{\sqcup n}\bigr) \cong F(X)^{\times n} \xrightarrow{\mu_n} F(X)
\end{equation*}
and we can view it as an extra functoriality of the presheaf $F$.
Namely, we can view it as a kind of pushforward along the \emph{fold} (also known as \emph{codiagonal}) map $\nabla : X^{\sqcup n} \to X$.

We'll see soon that commutative monoid structures on $F$ can be identified with this sort of extra functoriality.
To encode this extra functoriality, we are led to consider \emph{correspondences} (also known as \emph{spans}) in $\Sm_S$.

\begin{definition}
    Let $\fold$ denote the class of morphisms in $\Sm_S$ which can be written as a finite coproduct of fold maps $X^{\sqcup n} \to X$.
    We write $\Corr^{\fold}(\Sm_S) = \mathscr{S}\mathrm{pan}(\Sm_S, \fold, \mathrm{all})$ for the $\infty$-category of \emph{$\fold$-correspondences} in $\Sm_S$.
    The objects of $\Corr^{\fold}(\Sm_S)$ are the same as those of $\Sm_S$, and morphisms in $\Corr^{\fold}(\Sm_S)$ from $X$ to $Y$ are \emph{correspondences} (also known as \emph{spans})
    \begin{equation*}
        \begin{tikzcd}[row sep=small]
            & \widetilde{X} \ar[dl,"p"'] \ar[dr,"f"] \\
            X & & Y
        \end{tikzcd}
    \end{equation*}
    in $\Sm_S$ where $p$ is in $\fold$.
    Composition of correspondences is determined by taking pullbacks.
\end{definition}

We refer to \cite[Section 5]{Barwick_Mackey} for a construction of $\Corr^{\fold}(\Sm_S)$ as a complete Segal space.
The notation there is $A^\textit{eff}(\Sm_S, \fold, \mathrm{all})$ and it is referred to as the \emph{effect Burnside category}.

\begin{remark}
    Note that every morphism in $\Corr^{\fold}(\Sm_S)$ admits a factorization
    \begin{equation*}
        \left(\begin{tikzcd}[row sep=small]
            & \widetilde{X} \ar[dl,"p"'] \ar[dr,"f"] \\
            X & & Y
        \end{tikzcd}\right) = \left(
            \begin{tikzcd}[row sep=small]
            & \widetilde{X} \ar[dl,"p"'] \ar[dr,"\mathrm{id}"] & & \widetilde{X} \ar[dl,"\mathrm{id}"'] \ar[dr,"f"] \\
            X & & \widetilde{X} & & Y
        \end{tikzcd}
        \right)
    \end{equation*}
    into a ``backwards morphism'' followed by a ``forwards'' morphism.
    In fact, this type of factorization is essentially unique.
    Consequently, we can think of functors $F : \Corr^{\fold}(\Sm_S)^\op \to \Ani$ as consisting of
    \begin{itemize}
        \item for every $X \in \Sm_S$, an anima $F(X) \in \Ani$,
        \item for every morphism $f : X \to Y$ in $\Sm_S$, a pullback morphism $f^* : F(Y) \to F(X)$,
        \item for every morphism $p : \widetilde{X} \to X$ in $\fold$, a pushforward morphism $p_\otimes : F(\widetilde{X}) \to F(X)$.
    \end{itemize}
    This comes also with data witnessing the coherence of these maps.

    In summary, we may view presheaves on $\Corr^{\fold}(\Sm_S)$ as presheaves on $\Sm_S$ equipped with some extra covariant functoriality along maps in $\fold$.
    We can view $\Sm_S$ as the wide subcategory of $\Corr^{\fold}(\Sm_S)$ consisting of ``forwards'' morphisms.
    Restriction along the inclusion $\Sm_S \to \Corr^{\fold}(\Sm_S)$ forgets about the extra covariant functoriality.
\end{remark}

\begin{exercise}
    Show that the functor $\Sm_S \to \Corr^{\fold}(\Sm_S)$ preserves finite coproducts.
    Conclude that a presheaf on $\Corr^{\fold}(\Sm_S)$ preserves finite products if and only if its restriction to $\Sm_S$ does.
\end{exercise}

We now formalize the claim that, under very mild conditions, commutative monoid structures can be encoded as extra functoriality.

\begin{proposition}[See {\cite[Proposition C.5]{BachmannHoyois_Norms}}]\label{prop:cmon-lifts-extensions}
    Let $D$ be an $\infty$-category with finite products, and let $F : \Sm_S^\op \to D$ be a functor that preserves finite products.
    There is an equivalence
    \begin{equation*}
        \left\{
            \begin{tikzcd}
                & \CMon(D) \ar[d] \\
                \Sm_S^\op \ar[r,"F"'] \ar[ur, dashed] & D
            \end{tikzcd}
        \right\} \cong \left\{
            \begin{tikzcd}
                \Sm_S^\op \ar[r,"F"] \ar[d] & D \\
                \Corr^{\fold}(\Sm_S)^\op \ar[ur,dashed]
            \end{tikzcd}
        \right\}
    \end{equation*}
    between lifts along the forgetful functor $\CMon(D) \to D$ and extensions along the inclusion $\Sm_S \to \Corr^{\fold}(\Sm_S)$.
\end{proposition}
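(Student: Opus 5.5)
The plan is to reduce to the universal case via a semiadditivity argument in the style of Cranch and Glasman. First, replace $\CMon(D)$ by $\Fun^\times(\Corr^{\fold}(\mathrm{Fin}), D)$. Here $\Corr^{\fold}(\mathrm{Fin}) = \mathscr{S}\mathrm{pan}(\mathrm{Fin}, \mathrm{all}, \mathrm{all})$ is the effective Burnside category of finite sets; over finite sets every map is a coproduct of fold maps up to isomorphism. Since $D$ has finite products, $\CMon(D)$ can be modeled as finite-product-preserving functors $\mathrm{Fin}_*\to D$ satisfying Segal conditions. The equivalence
\begin{equation*}
    \CMon(D) \simeq \Fun^\times\bigl(\mathscr{S}\mathrm{pan}(\mathrm{Fin}), D\bigr)
\end{equation*}
is the standard result of Cranch and Glasman, with the forgetful functor given by evaluation at the singleton $\ast$. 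I will take this as input.

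Next, rewrite both sides of the proposition. A lift $\Sm_S^\op \to \CMon(D)$ of $F$ is a functor $\Sm_S^\op \times \mathscr{S}\mathrm{pan}(\mathrm{Fin}) \to D$ that preserves finite products in the second variable and restricts to $F$ at $\ast$. Since $F$ preserves finite products, this functor is automatically product preserving in the first variable as well. On the other side, observe that $\Corr^{\fold}(\Sm_S)$ is obtained from $\Sm_S$ by freely adjoining the covariant fold functoriality. Concretely, there is a functor
\begin{equation*}
    \Sm_S \times \mathscr{S}\mathrm{pan}(\mathrm{Fin}) \to \Corr^{\fold}(\Sm_S), \qquad (X, A) \mapsto X^{\sqcup A} = X\times A.
\end{equation*}
It sends a span $A \leftarrow B \to C$ to $X\times A \leftarrow X\times B \to X\times C$. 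This span has backward leg a coproduct of fold maps, and the functor is defined by functoriality of spans under products with $X$.

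The key claim is that restriction along this functor induces an equivalence between finite-product-preserving presheaves $\Corr^{\fold}(\Sm_S)^\op\to D$ and functors $\Sm_S^\op\times\mathscr{S}\mathrm{pan}(\mathrm{Fin})\to D$ that preserve products in each variable separately. Granting this, the two sides of the proposition match. Both are then fibers over $F$ of the evaluation at $\ast$, respectively the restriction to $\Sm_S$, so the equivalence of the proposition follows. By the exercise above, product preservation on $\Corr^{\fold}(\Sm_S)$ is detected on $\Sm_S$.

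The main obstacle is this key claim. I would prove it by exhibiting $\Corr^{\fold}(\Sm_S)$ as the "semiadditive tensor" of $\Sm_S$ with $\mathscr{S}\mathrm{pan}(\mathrm{Fin})$. The first step is to note that product-preserving presheaves on $\Corr^{\fold}(\Sm_S)$ land in the semiadditive $\infty$-category $\Fun^\times(\Sm_S^\op, \CMon(D))$. Indeed, in $\Corr^{\fold}(\Sm_S)$ finite coproducts are biproducts, since spans are self-dual on $\fold$-legs. The second step is to show that the universal property of $\Corr^{\fold}(\Sm_S)$ is that of the semiadditive envelope of $\Sm_S$ relative to coproducts. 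This mirrors the argument for finite sets: use the essentially unique backward/forward factorization from the remark, and check on mapping anima that
\begin{equation*}
    \Map_{\Corr^{\fold}}(X,Y) \simeq \coprod_{n} \Map(X^{\sqcup n}\text{-covers}, Y),
\end{equation*}
which is the free commutative monoid structure matching the Burnside description. Alternatively, I would cite Barwick's unfurling/Mackey functor machinery in \cite{Barwick_Mackey} to identify $\Fun^\times(\Corr^{\fold}(\Sm_S)^\op, D)$ with $\Fun^\times(\Sm_S^\op,\CMon(D))$ directly, following \cite[Appendix C]{BachmannHoyois_Norms}.
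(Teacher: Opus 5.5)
The paper gives no argument of its own here; it only cites \cite[Proposition C.5]{BachmannHoyois_Norms}. Your preliminary steps are fine: the model $\CMon(D)\simeq\mathrm{Fun}^\times(\mathscr{S}\mathrm{pan}(\mathrm{Fin}),D)$, the automatic product preservation in the $\Sm_S$-variable (since $\CMon(D)\to D$ is conservative and preserves products), and the functor $(X,A)\mapsto X\times A$. But they do not reduce anything. Because $\mathrm{Fun}^{\times,\times}(\Sm_S^\op\times\mathscr{S}\mathrm{pan}(\mathrm{Fin}),D)\simeq\mathrm{Fun}^\times(\Sm_S^\op,\CMon(D))$, your key claim is the proposition itself, stated over all of $\mathrm{Fun}^\times(\Sm_S^\op,D)$ instead of fiberwise over $F$.

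So everything rests on the key claim, and there the proposal has a genuine gap. The semiadditivity of $\Corr^{\fold}(\Sm_S)$ is correct, and it gives the unique lift of product-preserving presheaves to $\CMon(D)$ by \cite{GGN}, but this is the easy half. The assertion that $\Corr^{\fold}(\Sm_S)$ is the semiadditive envelope of $\Sm_S$ cannot be invoked, because it is equivalent to what you are proving. Granting it, the proposition follows from the semiadditivity step alone, applied with the semiadditive target $\CMon(D)^\op$, and the detour through $\mathscr{S}\mathrm{pan}(\mathrm{Fin})$ becomes unnecessary.

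Computing mapping anima inside $\Corr^{\fold}(\Sm_S)$ cannot establish a universal property without a comparison functor to or from an object already known to have it. You construct none; $\Sm_S\otimes\mathscr{S}\mathrm{pan}(\mathrm{Fin})$ is never defined. The displayed formula is also off in two ways. A backward leg in $\fold$ is a coproduct of fold maps over a clopen decomposition of $X$. And automorphisms of $X^{\sqcup n}$ over $X$ contribute homotopy orbits. Correctly, $\mathrm{Map}(X,Y)$ is the value at $X$ of the free commutative monoid on the representable $Y$ in finite-product-preserving presheaves on $\Sm_S$. This equals $\coprod_n(\mathrm{Map}(X,Y)^n)_{h\Sigma_n}$ only when $X$ has no nontrivial clopen decomposition. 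Falling back on \cite{Barwick_Mackey} or \cite[Appendix C]{BachmannHoyois_Norms} is citing the result, which is exactly what the paper does; in that case your preceding reduction adds nothing.

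To turn your idea into a proof, proceed as follows.
\begin{enumerate}
\item Reduce to $D=\Ani$ by Yoneda.
\item Send $X$ to $h(X)=\mathrm{Map}(-,X)|_{\Sm_S^\op}$. This is a product-preserving presheaf of commutative monoids, using semiadditivity; the sum is disjoint union of spans.
\item The corrected mapping-anima computation says precisely that the canonical map from the free such object on $X$ to $h(X)$ is an equivalence. Hence $h$ is fully faithful, with image the frees on representables.
\item $\CMon(\mathrm{Fun}^\times(\Sm_S^\op,\Ani))$ is monadic over $\mathrm{Fun}^\times(\Sm_S^\op,\Ani)$ with a sifted-colimit-preserving monad. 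So it is $\mathrm{Fun}^\times(\mathcal{F}^\op,\Ani)$, where $\mathcal{F}$ is the full subcategory of these frees.
\item Combining the last two steps identifies $\mathrm{Fun}^\times(\Corr^{\fold}(\Sm_S)^\op,\Ani)$ with $\mathrm{Fun}^\times(\Sm_S^\op,\CMon(\Ani))$, compatibly with restriction to $\Sm_S$.
\end{enumerate}
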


In other words, in the context of presheaves on $\Sm_S$, we may identify algebraic structure as being extra functoriality.
This is a theme that will reoccur throughout this article.

With the above discussion in mind, we make the following:

\begin{definition}
    A \emph{symmetric monoidal $\infty$-category} over $S$ is a functor $\Corr^{\fold}(\Sm_S)^\op \to \Cat_\infty$ that preserves finite products.
\end{definition}

\begin{example} \label{example:SH-extended-to-fold}
    Using Proposition~\ref{prop:cmon-lifts-extensions} and Remark~\ref{remark:SH-is-CMon}, we can encode the smash product on $\SH(X)$ for $X \in \Sm_S$ in terms of an extension
    \begin{equation*}
        \begin{tikzcd}
            \Sm_S^\op \ar[r,"\SH"] \ar[d] & \Cat_\infty. \\
            \Corr^{\fold}(\Sm_S)^\op \ar[ur,dashed,"\SH"']
        \end{tikzcd}
    \end{equation*}
    The resulting functor is a symmetric monoidal $\infty$-category over $S$.
\end{example}

Given a symmetric monoidal $\infty$-category $T : \Corr^{\fold}(\Sm_S)^\op \to \Cat_\infty$ over $S$, we will usually use the notation
\begin{align*}
    X &\mapsto T(X)\\
    \Bigl(Y \xleftarrow{f} X \Bigr) &\mapsto \Bigl(T(Y) \xrightarrow{f^*} T(X) \Bigr) \\
    \Bigl(\widetilde{X} \xrightarrow{p} X \Bigr) &\mapsto \Bigl(T (\widetilde{X}) \xrightarrow{p_\otimes} T(X) \Bigr)
\end{align*}
for the action of $T$ on the objects, forwards morphisms, and backwards morphisms in $\Corr^{\fold}(\Sm_S)$, respectively.

\begin{exercise}
    Let $T : \Corr^{\fold}(\Sm_S)^\op \to \Cat_\infty$ be a symmetric monoidal $\infty$-category over $S$.
    Let $X$ be a smooth $S$-scheme, let $n \geq 0$ be a natural number, and consider the fold map $\nabla : X^{\sqcup n} \to X$.
    Prove that
    \begin{equation*}
        \nabla_\otimes \nabla^* (E) \cong E^{\otimes n}
    \end{equation*}
    for all $E \in T(X)$.
\end{exercise}

We can also recast commutative algebras in terms of $\Corr^{\fold}(\Sm_S)$.
The precise details are a bit technical, so we'll be impressionistic here.

\begin{definition}
        Let $T : \Corr^{\fold}(\Sm_S)^\op \to \Cat_\infty$ be a symmetric monoidal $\infty$-category over $S$.
        Roughly speaking, a \emph{section} $E$ of (the coCartesian fibration associated to) the functor $T : \Corr^{\fold}(\Sm_S)^\op \to \Cat_\infty$ consists of
    \begin{itemize}
        \item for every $X \in \Sm_S$, an object $E_X \in T(X)$,
        \item for every morphism $f : X \to Y$ in $\Sm_S$, a morphism $f^\sharp : f^* (E_Y) \to E_X$, and
        \item for every morphism $p : \widetilde{X} \to X$ in $\fold$, a morphism $\mu_p : p_\otimes (E_{\widetilde{X}}) \to E_X$.
    \end{itemize}
    There are also further data that encode an infinite hierarchy of coherence.
\end{definition}

\begin{proposition}\label{prop:CAlg-as-sections}
    Let $T : \Corr^{\fold}(\Sm_S)^\op \to \Cat_\infty$ be a symmetric monoidal $\infty$-category over $S$.
    There is an equivalence between
    \begin{enumerate}
        \item commutative algebras in $T(S)$, and
        \item sections $E$ of $T : \Corr^{\fold}(\Sm_S)^\op \to \Cat_\infty$ such that, for every map $f : X \to Y$ in $\Sm_S$, the morphism $f^\sharp : f^* (E_Y) \to E_X$ is an equivalence.
    \end{enumerate}
\end{proposition}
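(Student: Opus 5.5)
The strategy is to reduce everything to a universal statement about the $\infty$-category of finite sets. I would first exhibit the relevant part of $\Corr^{\fold}(\Sm_S)$ as built from spans of finite sets. Let $\mathrm{Fin}$ be the category of finite sets, and let $\Span(\mathrm{Fin})$ be the effective Burnside $\infty$-category of spans $I \leftarrow K \to J$ with arbitrary legs. There is a functor
\begin{equation*}
    c : \Span(\mathrm{Fin})^{\op} \simeq \Span(\mathrm{Fin}) \to \Corr^{\fold}(\Sm_S), \qquad I \mapsto S^{\sqcup I},
\end{equation*}
induced by $I \mapsto \coprod_I S$. Here a map of finite sets $K \to I$ goes to a map $S^{\sqcup K} \to S^{\sqcup I}$ that is a finite coproduct of fold maps, hence lies in $\fold$. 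This is compatible with pullbacks because $\coprod_I S$ preserves fiber products of finite sets. The key classical input is the Lurie/Barwick--Glasman--Nardin identification: a finite-product-preserving functor $\Span(\mathrm{Fin}) \to \Cat_\infty$ is the same as a symmetric monoidal $\infty$-category. Under this identification, sections of its unstraightening over $\Span(\mathrm{Fin})$ that are cocartesian over the ``forward'' (pullback) maps correspond to commutative algebras. This is essentially the span model of $\mathrm{Fin}_*$: the cocartesian condition along the inert-type maps recovers the Segal condition, and the remaining data are the multiplication maps $\mu_p$.

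Next I would show that the restriction $T \circ c$ is exactly the symmetric monoidal structure on $T(S)$. Since $T$ preserves finite products, $T(S^{\sqcup I}) \simeq T(S)^{I}$. For a map $p: K \to I$, the functor $p_\otimes$ becomes the functor $T(S)^K \to T(S)^I$ that tensors over the fibers, as in the earlier exercise computing $\nabla_\otimes\nabla^*$. So commutative algebras in $T(S)$ are equivalent to sections of $T\circ c$ that are cocartesian over forward maps.

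It remains to compare sections over $\Corr^{\fold}(\Sm_S)$ that are cocartesian on forward maps with sections over the image of $c$. I would argue by right Kan extension. Every $X \in \Sm_S$ has a structure map $X \to S$ that is a forward map. Given a section $E$ in (2), the value $E_X \simeq (X\to S)^* E_S$ is forced. More generally, a backwards map $X^{\sqcup n} \to X$ is the base change of $S^{\sqcup n} \to S$ along $X \to S$. Since $T$ is a functor on correspondences, $p_\otimes$ commutes with base change. Hence the restriction map from cocartesian-on-forwards sections over $\Corr^{\fold}(\Sm_S)$ to those over $\Span(\mathrm{Fin})$ should be an equivalence.

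To make this precise, I would use the factorization system (backward, forward) on $\Corr^{\fold}(\Sm_S)$. Sections cocartesian on forward maps are equivalent to sections of the localization of the cocartesian fibration at forward maps. I would then check that $c$ becomes an equivalence after inverting forward maps on both sides, or at least a cofinal/Kan extension equivalence at the level of the relevant fibrations. I expect this coherence step to be the main obstacle: one must show the infinite hierarchy of compatibilities is determined by the restriction to $S$. I would attack it via the relative Kan extension criterion of Lurie (HTT 4.3.2). Concretely, for each $X$ the comma category of $c$ under $X$ restricted to forward maps has an initial object, namely $X \to S$ together with its base changes. This makes the left/right Kan extension along $c$ exist and be computed pointwise by $f^*$. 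Finally, the condition that $f^\sharp$ be an equivalence is exactly the condition of being such a Kan extension.
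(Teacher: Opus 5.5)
There is a genuine gap, and it sits at the step you yourself flag as the main obstacle. Nothing in the proposal shows that restriction along $c$ is an equivalence on sections with all $f^\sharp$ invertible, and both mechanisms you offer fail.

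\textbf{The localization reformulation.} This is not available. $T$ does not invert forward maps, and localizing the total space at cocartesian edges computes a colimit, not a limit. Already over $\Delta^1$, inverting the cocartesian edge returns the fiber over $1$, whereas cocartesian sections are the fiber over $0$.

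\textbf{The comma-category claim.} This is false as soon as $X$ is disconnected. A map in $\fold$ is a \emph{finite coproduct} of fold maps. So for $X=X_1\sqcup X_2$, the map $X_1^{\sqcup 2}\sqcup X_2\to X$ lies in $\fold$ but is not the base change of any $S^{\sqcup n}\to S$ along $X\to S$. It is only a base change of some $c(K\to I)$ along a clopen decomposition $X\to S^{\sqcup I}$. Consequences:
\begin{itemize}
\item In the comma category of $c$ under $X$, the object $X\to S$ has no morphism in either direction to the object $X\leftarrow X_1^{\sqcup 2}\sqcup X_2\to S$. Composing with $c(\ast\leftarrow K\to\ast)$ multiplies the number of sheets over each $X_i$ by $|K|$, so the sheet counts cannot match.
\item Even among forward maps, the decomposition $X\to S\sqcup S$ receives no map from $X\to S$.
\item $c$ is not fully faithful once $S$ is disconnected: clopen decompositions of $S$ give forward maps, and maps in $\fold$, that do not come from finite sets.
\item A qcqs $S$ may have infinitely many connected components (e.g.\ the spectrum of an infinite Boolean ring). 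Then at $X=S$ there is no initial object at all, so you cannot reduce to the connected case.
\end{itemize}
Your final sentence, that invertibility of $f^\sharp$ is ``exactly the condition of being such a Kan extension'', is therefore precisely what remains unproved.

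\textbf{How the paper avoids this.} The paper's sketch works fiberwise over every $X$ and uses the finality of $S$ only at the end.
\begin{itemize}
\item For each $X$ and each fold map $p:X^{\sqcup n}\to X$, the composite $(E_X)^{\otimes n}\simeq p_\otimes p^*E_X\simeq p_\otimes E_{X^{\sqcup n}}\xrightarrow{\mu_p}E_X$ supplies the multiplications. So your span-model input, applied to $I\mapsto X^{\sqcup I}$ and not only to $I\mapsto S^{\sqcup I}$, makes each $E_X$ an object of $\CAlg(T(X))$.
\item Product preservation of $T$ handles general maps in $\fold$, since they split over clopen decompositions into fold maps on the pieces.
\item Invertibility of the $f^\sharp$ says these algebras form a cocartesian section over $\Sm_S^{\mathrm{op}}$ of $X\mapsto\CAlg(T(X))$. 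That is, they give an object of $\lim_{X\in\Sm_S^{\mathrm{op}}}\CAlg(T(X))$, which is $\CAlg(T(S))$ because $S$ is initial in $\Sm_S^{\mathrm{op}}$.
\end{itemize}
Put differently, the right comparison functor is $\Sm_S\times\mathrm{Span}(\mathrm{Fin})\to\Corr^{\fold}(\Sm_S)$, $(X,I)\mapsto X^{\sqcup I}$. This is the functor underlying the paper's description of commutative monoid structures via fold correspondences. Your $c$ is its restriction to $\{S\}\times\mathrm{Span}(\mathrm{Fin})$. That last restriction is harmless precisely because $S$ is final. The first step, however, needs the product-preservation argument rather than a single initial object $X\to S$.
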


To illustrate the idea, suppose $E$ is a section satisfying Condition~(2) of Proposition~\ref{prop:CAlg-as-sections}.
If $p : \widetilde{X} \to X$ is the fold map $X^{\sqcup n} \to X$, then we have a morphism
\begin{equation*}
    (E_X)^{\otimes n} \cong p_\otimes p^* (E_X) \cong p_\otimes (E_{\widetilde{X}}) \xrightarrow{\mu_p} E_X.
\end{equation*}
These encode the $n$-ary multiplication maps for a commutative algebra structure on $E_X$.
In fact, since $S$ is a final object in $\Sm_S$, we see that each $E_X \in \CAlg(T(X))$ is the pullback of $E_S \in \CAlg(T(S))$ along the structure map $X \to S$.

\begin{remark}
    All of the above works with $\Sm_S$ replaced by an arbitrary \emph{extensive} $\infty$-category $C$.
    See \cite[Appendix~C]{BachmannHoyois_Norms}.
    Extensivity ensures that finite coproducts exist and are sufficiently nice. 
    This is necessary in order to define the $\infty$-category $\Corr^{\fold}(C)$.
\end{remark}

\section{Norms in Motivic Homotopy Theory} \label{section:normed-stuff}

As mentioned in Section~\ref{section:intro-intro}, the theory of normed algebras was developed in order to capture a sort of algebra in which the multiplication operation is ``more commutative than commutative''.
Having reframed the theory of commutative algebras in terms of pushforwards along fold maps, we can be a bit more precise about what we mean by this: allow for pushforwards along more maps.

\begin{definition}
    Let $\fet$ denote the class of finite étale morphisms in $\Sm_S$.
    We write $\Corr^{\fet}(\Sm_S) = \mathscr{S}\mathrm{pan}(\Sm_S, \fet,\mathrm{all})$ for the $\infty$-category of \emph{$\fet$-correspondences} in $\Sm_S$.
\end{definition}

\begin{definition}[See {\cite[Definition 6.5]{BachmannHoyois_Norms}}]
    A \emph{norm monoidal $\infty$-category} over $S$ is a functor $\Corr^\fet(\Sm_S)^\op \to \Cat_\infty$ that preserves finite products.
\end{definition}

Given a norm monoidal $\infty$-category $T : \Corr^\fet(\Sm_S)^\op \to \Cat_\infty$, we will usually use the notation
\begin{align*}
    X &\mapsto T(X)\\
    \Bigl(Y \xleftarrow{f} X \Bigr) &\mapsto \Bigl(T(Y) \xrightarrow{f^*} T(X) \Bigr) \\
    \Bigl(\widetilde{X} \xrightarrow{p} X \Bigr) &\mapsto \Bigl(T (\widetilde{X}) \xrightarrow{p_\otimes} T(X) \Bigr)
\end{align*}
for the action of $T$ on the objects, forwards morphisms, and backwards morphisms in $\Corr^\fet(\Sm_S)$, respectively.
We refer to the functor $p_\otimes$ as the \emph{norm} along the finite étale map $p$.

Note that fold maps are finite étale, so we can identify $\Corr^{\fold}(\Sm_S)$ as a wide subcategory of $\Corr^\fet(\Sm_S)$.
Consequently, normed monoidal structures are enhancements of symmetric monoidal structures.

\begin{remark}
    More generally, if $D$ is an $\infty$-category with finite products, then we may define a \emph{normed monoid} of objects in $D$ over $S$ to be a functor $\Corr^\fet(\Sm_S)^\op \to D$ that preserves finite products.
\end{remark}

\begin{remark}
    The term ``norm monoidal'' is inspired by Galois theory.
    Let $L/K$ be a finite Galois extension of fields with Galois group $G$.
    The \emph{norm} of an element $x \in L^\times$ is defined to be $\mathrm{Nm}(x) = \prod_{\sigma \in G} \sigma(x)$.
    Note that this product lives in $K^\times$ since it is fixed by the Galois action.

    The norm functors of a norm monoidal $\infty$-category should be thought of as a generalized and categorified version of this idea.
    Namely, if $p : \widetilde{X} \to X$ is a finite Galois cover (and in particular finite étale) then the norm functor $p_\otimes$ should be thought of as a ``tensor product indexed by the Galois group''.
\end{remark}

\begin{example}
    Consider the functor $\Sm_{({-})} : \Sm_S^\op \to \Cat_\infty$ with
    \begin{align*}
        X &\mapsto \Sm_X \\
        \Bigl(Y \xleftarrow{f} X \Bigr) &\mapsto \Bigl(\Sm_Y \xrightarrow{f^*} \Sm_X \Bigr).
    \end{align*}
    This extends to a norm monoidal $\infty$-category $\Sm_{({-})} : \Corr^\fet(\Sm_S)^\op \to \Cat_\infty$ where the norm functor along a finite étale map $p : \widetilde{X} \to X$ is given by the right adjoint $\mathrm{R}_p : \Sm_{\widetilde{X}} \to \Sm_X$ of the pullback functor $p^* : \Sm_X \to \Sm_{\widetilde{X}}$, also known as \emph{Weil restriction} along $p$.\footnote{
        Strictly speaking, the right adjoint of $p^* : \Sm_X \to \Sm_{\widetilde{X}}$ doesn't actually exist in this generality.
        To remedy the situation, one can either restrict to quasi-projective schemes, or extend the theory to allow for qcqs algebraic spaces.
        The first option is used in \cite{BachmannHoyois_Norms}.
        Since every qcqs algebraic space admits a Nisnevich cover by quasi-projectives schemes \cite[Chapter~II, Theorem~6.4]{Knutson_AlgSpc}, both options are harmless from the perspective of motivic homotopy theory and give the same end result.
    }
\end{example}

\begin{exercise}
    Let $L/K$ be a finite Galois extension of fields with Galois group $G$, and let $\Spec B$ be a smooth affine $L$-scheme.
    Consider the $L$-algebra
    \begin{equation*}
        \widetilde{A} = \bigotimes_{g \in G} g^* B
    \end{equation*}
    where $g^* B = L \otimes_{L, g} B$ is the base change of $B$ along $g : L \to L$, and the tensor product is over $L$.
    Note that $\widetilde{A}$ comes with a (semilinear!) action of $G$ given by permuting the tensor factors.
    Let $A = \widetilde{A}^G$ be the $k$-algebra of elements fixed by the action of $G$.
    Prove that $\Spec A$ is the Weil restriction of $\Spec B$ along $\Spec L \to \Spec K$.
\end{exercise}

\begin{exercise}
    Let $X$ be a smooth $S$-scheme.
    Verify that Weil restriction along $\nabla : X \sqcup X \to X$ is equivalent to the binary Cartesian product functor
    \begin{equation*}
        \Sm_X \times \Sm_X \to \Sm_X
    \end{equation*}
    under the identification $\Sm_{X \sqcup X} \cong \Sm_X \times \Sm_X$.
\end{exercise}

\begin{proposition}[See {\cite[Section 6.1]{BachmannHoyois_Norms}}] \label{prop:SH-is-normed}
    There is a norm monoidal $\infty$-category
    \begin{equation*}
        \begin{tikzcd}
            \Corr^{\fold}(\Sm_S)^\op \ar[d] \ar[r, "\SH"] & \Cat_\infty \\
            \Corr^{\fet}(\Sm_S)^\op \ar[ur, dashed, "\SH"']
        \end{tikzcd}
    \end{equation*}
    over $S$ which extends the smash product from Example~\ref{example:SH-extended-to-fold}.
\end{proposition}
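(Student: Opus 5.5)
We build the extension of $\SH$ to $\Corr^\fet(\Sm_S)^\op$ in stages: first on unstable objects, then on pointed objects, then stabilize by inverting $\mathbb{P}^1$. Each stage has to be compatible with the symmetric monoidal structures already in place.

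We begin with the Weil restriction example above. Since $\mathrm{R}_p$ is right adjoint to $p^*$, base change for Weil restriction gives a Beck--Chevalley condition along pullbacks, so $X \mapsto \Sm_X$ is a finite-product-preserving functor on $\Corr^\fet(\Sm_S)^\op$. We then pass to presheaves. Left Kan extension of $\mathrm{R}_p$ yields a functor $p_\otimes : \PSh(\Sm_{\widetilde{X}}) \to \PSh(\Sm_X)$ that preserves sifted colimits. The extension is performed via $\PSh_\Sigma$, because $\mathrm{R}_p$ preserves finite products but not coproducts. The Beck--Chevalley identities survive this step because every functor involved preserves sifted colimits and they already hold on representables.

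The key step is to show that $p_\otimes$ descends through the motivic localization $\mathrm{L}_\mot$. Concretely:
\begin{itemize}
\item $p_\otimes$ must send $\mathbb{A}^1$-equivalences to $\mathbb{A}^1$-equivalences. This follows because $\mathrm{R}_p(Y \times \mathbb{A}^1)$ is a vector bundle over $\mathrm{R}_p Y$, namely the Weil restriction of the trivial line bundle.
\item $p_\otimes$ must send Nisnevich covering sieves to Nisnevich equivalences. Here one uses that Weil restriction along a finite étale map carries Nisnevich covers to Nisnevich covers after passing to henselian local rings, or equivalently after checking on points.
\end{itemize}
Once this is done, $p_\otimes$ induces a functor $\HH(\widetilde{X}) \to \HH(X)$. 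For pointed objects we use $p_\otimes(Y_+) \cong (\mathrm{R}_p Y)_+$ and extend to $\HH(\widetilde{X})_* \to \HH(X)_*$ by taking the norm of a pointed object relative to the basepoint. In the fold case this recovers the smash product.

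Finally we stabilize, using the universal property of $\SH(X)$ as $\HH(X)_*[(\mathbb{P}^1)^{-1}]$. The norm of $\mathbb{P}^1$ along $p$ is the Thom space $\mathrm{Th}(p_*\mathcal{O})$ of a vector bundle on $X$, and Thom spaces of vector bundles are $\otimes$-invertible in $\SH(X)$. Hence the composite $\HH(\widetilde{X})_* \to \HH(X)_* \to \SH(X)$ inverts $\mathbb{P}^1$ and factors uniquely through $\SH(\widetilde{X})$. Making this factorization functorial in $\Corr^\fet$ requires a parametrized version of the universal property of inverting objects, applied sectionwise over the span category.

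I expect the main obstacle to be Nisnevich descent for $p_\otimes$, together with the coherence of the stabilization step. For descent, the tool I would reach for first is that $\mathrm{R}_p$ sends a Nisnevich square to a cube whose total map is a Nisnevich cover, checked on henselian local schemes where $p$ splits into a disjoint union of sections. Compatibility with the fold-map smash product is then automatic, because the fold-map norms coincide with the iterated product.
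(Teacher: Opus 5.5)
Your outline matches the paper's sketch: push Weil restriction through each step from $\Sm_X$ to $\SH(X)$. However, the stabilization step does not go through as you argue it.

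The universal property of $\SH(\widetilde X)$ as $\HH(\widetilde X)_*[(\mathbb{P}^1)^{-1}]$ holds in presentably symmetric monoidal $\infty$-categories. It therefore only factors \emph{colimit-preserving} symmetric monoidal functors. The composite $\HH(\widetilde X)_* \to \HH(X)_* \to \SH(X)$ preserves only sifted colimits. Already for the fold map it is $(A,B)\mapsto \Sigma^\infty(A\wedge B)$, which does not preserve coproducts in $\HH(X)_*\times\HH(X)_*$. So knowing that $p_\otimes(\mathbb{P}^1)\simeq\mathrm{Th}(p_*\mathcal{O})$ becomes invertible does not give a functor $\SH(\widetilde X)\to\SH(X)$, sectionwise or parametrized. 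What is missing is a universal property of $\mathbb{P}^1$-inversion for symmetric monoidal functors that preserve only sifted (or filtered) colimits. It is available because $\mathbb{P}^1$ is a \emph{symmetric} object: the cyclic permutation of $(\mathbb{P}^1)^{\wedge 3}$ is $\mathbb{A}^1$-homotopic to the identity. By Robalo, inverting $\mathbb{P}^1$ is then computed by the telescope along $\mathbb{P}^1\wedge({-})$, and this extra input is what lets the unstable norm extend to $\SH$.

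The pointed step also needs fixing. Suppose ``the norm of a pointed object relative to the basepoint'' means the functor on pointed objects induced by $p_\otimes$, which preserves the terminal object. For the fold map $\nabla : X\sqcup X\to X$ it sends $(A_1,A_2)$ to the cartesian product $A_1\times A_2$, not $A_1\wedge A_2$. Likewise $p_\otimes(Y_+)=\mathrm{R}_p(Y\sqcup\widetilde X)$; for the fold and $Y=(Y_1,Y_2)$ this is $(Y_1\times_X Y_2)\sqcup Y_1\sqcup Y_2\sqcup X$ rather than $(Y_1\times_X Y_2)_+$. Finally, $p_\otimes(\mathbb{P}^1)$ would be $\mathbb{P}^1\times\mathbb{P}^1$ rather than a Thom space. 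So the properties you claim do not hold for that construction.

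The pointed norm must instead be defined directly, in three steps. First, identify pointed objects with $\PSh_\Sigma$ of the category whose objects are the $Y_+$ and whose morphisms are maps defined on a clopen summand. Second, note that $\mathrm{R}_p$ preserves clopen immersions and fiber products, so it acts on this category. Third, take the sifted-colimit-preserving extension of $Y_+\mapsto(\mathrm{R}_pY)_+$.

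A smaller point in the same spirit concerns your motivic-localization bullets. Since $p_\otimes$ preserves neither coproducts nor pushouts, checking it on the generators $Y\times\mathbb{A}^1\to Y$ and on \v{C}ech nerves of covers does not by itself show that it preserves motivic equivalences. What suffices is that it inverts $f\sqcup\mathrm{id}_Z$ for every generator $f$ and every $Z$. This works because pushouts can be rewritten as geometric realizations of bar constructions, which $p_\otimes$ does preserve.
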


The norm functor $p_\otimes : \SH(\widetilde{X}) \to \SH(X)$ for $p : \widetilde{X} \to X$ is essentially characterized by the requirement that
\begin{itemize}
    \item it is symmetric monoidal,
    \item it preserves sifted colimits, and
    \item for $X \in \Sm_S$, we have $p_\otimes (\Sigma^\infty_{\mathbb{P}} X_+) \cong \Sigma^\infty_\mathbb{P} \mathrm{R}_p(X)_+$.
\end{itemize}
To prove Proposition~\ref{prop:SH-is-normed}, the essential idea is to observe that the each step in the passage from $\Sm_X$ to $\SH(X)$ is suitably compatible with Weil restriction.

We also have a notion of algebra for norm monoidal structures.
Let $T : \Corr^\fet(\Sm_S)^\op \to \Cat_\infty$ be a norm monoidal $\infty$-category over $\Sm_S$.
Sections of $T : \Corr^\fet(\Sm_S)^\op \to \Cat_\infty$ are essentially the same as sections of a functor $\Corr^{\fold}(\Sm_S)^\op \to \Cat_\infty$ except that we have $\mu_p : p_\otimes E_{\widetilde{X}} \to E_X$ for all finite étale maps $p : \widetilde{X} \to X$.

\begin{definition}[See {\cite[Definition 7.1]{BachmannHoyois_Norms}}]
    Let $T : \Corr^\fet(\Sm_S)^\op \to \Cat_\infty$ be a norm monoidal $\infty$-category over $S$.
    A \emph{normed algebra} in $T$ over $S$ is a section $E$ of $T : \Corr^\fet(\Sm_S)^\op \to \Cat_\infty$ such that $f^\sharp : f^* E_Y \to E_X$ is an equivalence for every $f : X \to Y$ in $\Sm_S$.
    We'll write $\NAlg(T(S))$ for the $\infty$-category of normed algebras in $T$ over $S$.
\end{definition}

\begin{theorem}[\cite{BachmannHoyois_Norms}]
    The following motivic spectra admit lifts to $\NAlg(\SH(S))$.\footnote{We remind the reader that we are taking $\mathrm{H}\mathbb{Z}_S$ to be the zero slice of $\mathrm{KGL}_S$.
    See Remark~\ref{remark:some-objects-of-SH}.}
    \begin{enumerate}
        \item The motivic cohomology spectrum $\mathrm{H}\mathbb{Z}_S$.
        \item The algebraic $\mathrm{K}$-theory spectrum $\mathrm{KGL}_S$ and its very effective cover $\mathrm{kgl}_S$.
        \item The algebraic cobordism spectra $\mathrm{MGL}_S$ and $\mathrm{MSL}_S$.
    \end{enumerate}
\end{theorem}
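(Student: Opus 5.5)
The plan is to produce each normed algebra from geometric data carrying Weil restriction functoriality, following the Bachmann--Hoyois method: build a presheaf with ``norm-like'' covariance along finite étale maps, then pass through a left adjoint that is compatible with the norms. The key input is that the passage $\PSh(\Sm_X) \to \HH(X) \to \SH(X)$ commutes with Weil restriction: motivic localization and $\Sigma^\infty_\mathbb{P}$ intertwine $\mathrm{R}_p$ with $p_\otimes$ for $p$ finite étale. By the universal property of sections of $\SH : \Corr^\fet(\Sm_S)^\op \to \Cat_\infty$, a normed monoid (in the sense of the Remark after the definition of norm monoidal $\infty$-categories) of suitable presheaves then yields a normed algebra after stabilization. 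For each spectrum we identify such a geometric model.

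\emph{Cobordism spectra.} Write $\mathrm{MGL}_S$ as a colimit of Thom spectra $\Sigma^\infty_\mathbb{P}\mathrm{Th}(\xi)$ over the moduli of virtual vector bundles, in the form $\mathrm{MGL}_S \simeq \mathrm{M}(\mathrm{K}^\circ \to \SH)$, a motivic Thom spectrum functor applied to the $\mathrm{K}$-theory anima. For finite étale $p:\widetilde X\to X$, the pushforward $p_*$ of vector bundles gives norm maps $\Vect(\widetilde X)\to\Vect(X)$. The multiplicativity of the Thom construction, $p_\otimes \mathrm{Th}(\xi)\simeq \mathrm{Th}(p_*\xi)$, then shows that $\mathrm{M}$ sends normed monoids over $\SH$ to normed algebras. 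For $\mathrm{MSL}_S$, repeat with bundles equipped with a trivialization of the determinant, using that $\det(p_*\xi)\simeq \mathrm{Nm}_p(\det\xi)\otimes \det(p_*\mathcal O)$ and that $\det(p_*\mathcal O)^{\otimes 2}$ is canonically trivial via the discriminant. This requires care (one may instead use $\mathrm{SL}^c$-type orientations), and I expect it to be the main obstacle for $\mathrm{MSL}$.

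\emph{K-theory and motivic cohomology.} For $\mathrm{KGL}_S$, consider the presheaf $X\mapsto \Vect(X)$ with additive structure and tensor product. The $p_*$ described above is additive, but we need multiplicative norms. So use instead the norm $p_\otimes$ on $\mathrm{QCoh}$ (Deligne's tensor induction / multiplicative transfer for vector bundles), which makes $\Vect$ a normed semiring. Stabilizing, $\Sigma^\infty_\mathbb{P}\mathrm{L}_\mot\Vect_+$ becomes a normed algebra, and inverting the Bott element $\beta$ recovers $\mathrm{KGL}_S$ via the Snaith-type theorem $\mathrm{KGL}\simeq \Sigma^\infty_\mathbb{P}(\mathbb{P}^\infty)_+[\beta^{-1}]$, with $\mathbb{P}^\infty$ normed through line bundles under $\mathrm{Nm}_p$. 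We then check that inverting $\beta$ is a localization compatible with norms: $p_\otimes$ of $\beta$ is a unit times a power of $\beta$ after pulling back along $p$. The very effective cover $\mathrm{kgl}_S$ and the slice $\mathrm{H}\mathbb{Z}_S=s_0\mathrm{KGL}_S$ follow once we show that $p_\otimes$ preserves $\SH^{\mathrm{veff}}$ and the effective/slice filtration. This holds because $p_\otimes$ preserves sifted colimits, sends $\Sigma^\infty_\mathbb{P}Y_+$ to $\Sigma^\infty_\mathbb{P}\mathrm{R}_p(Y)_+$, and is suitably compatible with extensions (via the binomial-type formula for $p_\otimes$ of cofibers). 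Hence the right adjoints $\tilde f_0$, $f_0$ and the zero slice lift to normed algebras.
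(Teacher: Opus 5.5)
For $\mathrm{MGL}_S$, $\mathrm{MSL}_S$, $\mathrm{kgl}_S$ and $\mathrm{H}\mathbb{Z}_S$ your outline follows the route indicated in Remark~\ref{remark:constructing-nalg-examples}. For the cobordism spectra this is multiplicativity of the Thom construction, $p_\otimes\mathrm{Th}(\xi)\simeq\mathrm{Th}(p_*\xi)$. For $\tilde f_0$, $f_0$ and $s_0$ it is closure of $\SH^{\mathrm{veff}}$ and $\SH^{\mathrm{eff}}$ under norms, together with compatibility of norms with $s_0$-equivalences via the cofiber filtration.

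For $\mathrm{KGL}_S$ you take a genuinely different route. The paper, following Bachmann--Hoyois, constructs norms on Robalo's noncommutative motives. Robalo identifies $\mathrm{KGL}_S$ with the image of the unit under the right adjoint of $\SH\to\SH^{\mathrm{nc}}$, so $\mathrm{KGL}_S$ inherits norms formally, with no computation involving $\beta$. You instead start from $\mathbb{P}^\infty\simeq\mathrm{Pic}$, which is normed for free by étale descent, invoke Snaith's theorem, and invert $\beta$ via the Bachmann--Hoyois criterion for inverting elements. This is the strategy the paper's closing remark contemplates, and cannot yet carry out, for $\mathrm{KO}_S$. Your route avoids building norms on $\SH^{\mathrm{nc}}$. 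In exchange, it needs Snaith's theorem as an equivalence of $\mathbb{E}_\infty$-rings and puts all the weight on the norms of $\beta$. The detour through $\mathrm{Vect}$ with tensor induction plays no role; only the line-bundle part enters.

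That load-bearing step is not established by what you wrote. The criterion asks the following for every finite étale $p\colon\widetilde X\to X$ of degree $d$ with $X\in\Sm_S$. The class $N_p(\beta_{\widetilde X})$, whose source is $\Sigma^\infty_{\mathbb{P}}\mathrm{Th}(p_*\mathcal{O}_{\widetilde X})$ rather than a bigraded sphere, must act invertibly on $\Sigma^\infty_{\mathbb{P}}\mathbb{P}^\infty_+[\beta^{-1}]$ over $X$ itself. Your check does not give this, for two reasons.
\begin{itemize}
\item Pulling back along $p$ only splits off one sheet: $\widetilde X\times_X\widetilde X\simeq\widetilde X\sqcup\widetilde X'$.
\item Even over a full splitting cover, where $N_p(\beta)$ does become $\beta^d$, invertibility does not descend in general. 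Pullback along finite étale covers is not conservative on $\SH$; for example, pulling back along $\mathrm{Spec}\,\mathbb{C}\to\mathrm{Spec}\,\mathbb{R}$ kills $\mathbf{1}[\rho^{-1}]$.
\end{itemize}

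What rescues the argument is that the target $\mathrm{KGL}$ is oriented. The Thom isomorphism for $p_*\mathcal{O}_{\widetilde X}$ reduces the question to whether a certain class $u\in KH_0(X)$ is a unit. After a continuity reduction to noetherian $X$, this can be checked at points of $X$. There $u\in K_0(k)=\mathbb{Z}$, and its value can be read off after a field extension splitting $p$, where it is visibly a unit.

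There is also a smaller slip for $\mathrm{MSL}_S$. The correct formula is $\det(p_*\xi)\simeq N_p(\det\xi)\otimes\det(p_*\mathcal{O}_{\widetilde X})^{\otimes\mathrm{rk}\,\xi}$, so a single factor of $\det(p_*\mathcal{O}_{\widetilde X})$ could not be removed by the discriminant. However, in the rank-zero model $\mathrm{K}^\circ$ you already use for $\mathrm{MGL}_S$ the twist disappears. So neither the discriminant nor $\mathrm{SL}^c$-orientations are needed.
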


\begin{remark} \label{remark:constructing-nalg-examples}
    Each of the normed algebra structures mentioned above requires a substantial amount of work to establish.
    For $\mathrm{KGL}_S$, this requires constructing norms for Robalo's noncommutative version of $\SH$.
    From here, the normed algebra structure on $\mathrm{H}\mathbb{Z}_S$ is the established by studying the compatibility of norms with the slice filtration.
    Finally, for the cobordism spectra, this requires a careful analysis of the multiplicative properties of the Thom construction.
\end{remark}

Generally, the construction of norm monoidal $\infty$-categories and normed algebras requires some nontrivial work.
In particular, it is often not at all obvious how to extend a given symmetric monoidal $\infty$-category over $S$ to a norm monoidal one, nor how to extend a commutative algebra to a normed algebra.
However, there is one type of situation in which such extensions are easy. 

\begin{slogan} \label{slogan:norms-by-descent}
    In the presence of finite étale descent, symmetric monoidal structures \emph{automatically} upgrade to norm monoidal structures.
\end{slogan}

More precisely, we have the following:

\begin{proposition}[See {\cite[Proposition C.11]{BachmannHoyois_Norms}}]
    Let $D$ be a presentable $\infty$-category, and let $F : \Sm_S^\op \to D$ be a presheaf that satisfies descent for the finite étale topology.
    Then there is an equivalence
    \begin{equation*}
        \left\{
            \begin{tikzcd}
                \Sm_S^\op \ar[r,"F"] \ar[d] & D \\
                \Corr^{\fold}(\Sm_S)^\op \ar[ur,dashed]
            \end{tikzcd}
        \right\} \cong \left\{
            \begin{tikzcd}
                \Sm_S^\op \ar[r,"F"] \ar[d] & D \\
                \Corr^{\fet}(\Sm_S)^\op \ar[ur,dashed]
            \end{tikzcd}
        \right\}
    \end{equation*}
    between symmetric monoidal structures on $F$ and norm monoidal structures on $F$.
\end{proposition}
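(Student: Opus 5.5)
The plan is to show that restriction along the inclusion $j : \Corr^{\fold}(\Sm_S) \to \Corr^{\fet}(\Sm_S)$ is an equivalence on the relevant presheaf categories. We identify both the $\fold$-extensions and the $\fet$-extensions of $F$ as right Kan extensions from a common source. The key geometric input is the following: every finite étale map $p : \widetilde{X} \to X$ becomes a finite coproduct of fold maps after a finite étale base change. Concretely, there is a finite étale cover $U \to X$ with $\widetilde{X} \times_X U \to U$ isomorphic to a coproduct of fold maps $U_i^{\sqcup n_i} \to U_i$; we use the standard Galois-closure argument, inducting on degree by base changing along $p$ itself, which splits off a diagonal copy. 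Thus $\fold$-maps are finite étale locally the same as $\fet$-maps.

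With this in hand, the steps are as follows. First, observe that $F$ is finite-étale-local, so an extension $\widetilde{F}$ to $\Corr^{\fet}(\Sm_S)^\op$ that preserves finite products is determined by its values on the subcategory where the norm along $p$ is computed by descent. For a finite étale $p$, take a finite étale Čech nerve $U_\bullet \to X$ splitting $p$. By descent, $F(\widetilde{X}) \simeq \lim F(\widetilde{X}\times_X U_\bullet)$ and $F(X) \simeq \lim F(U_\bullet)$. The pushforward $p_\otimes$ must then be the limit of the levelwise pushforwards along the split maps $\widetilde{X}\times_X U_n \to U_n$, which are in $\fold$ and hence already defined by the symmetric monoidal structure. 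This shows that the restriction map from $\fet$-extensions to $\fold$-extensions is fully faithful.

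To make this coherent, rather than argue with individual maps, I would phrase it as the claim that $\fold$-extensions which are fet-sheaves are precisely the right Kan extension along $j^\op$. Equivalently, for each $X$, the comma category of $X$ over $j$, restricted to spans whose backward leg splits, is suitably cofinal in the indexing category for the Kan extension. This reduces to the statement that $\Corr^{\fet}(\Sm_S)$ is obtained from $\Corr^{\fold}(\Sm_S)$ by ``sheafifying'' the class of backward maps for the finite étale topology. Essential surjectivity then follows by defining $\widetilde{F} = j_* F$, checking that it restricts back to $F$, and checking that it preserves finite products; the latter holds because limits commute with products.

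I expect the main obstacle to be the cofinality/Kan extension step. One must verify that, for a finite étale backward map, the relevant $\infty$-category of factorizations through split covers is weakly contractible in the appropriate sense, so that descent computes the Kan extension. Composition of spans involves pullbacks that mix forward maps with finite étale covers, and this bookkeeping is delicate. I would first try to reduce it to the analogous statement for the finite étale site with a product-preserving presheaf on spans, as in the equivariant setting of finite $G$-sets versus finite sets, where it is a known effective Burnside category computation.
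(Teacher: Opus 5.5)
Your proposal is correct in outline and takes the same approach as the paper. The paper gives no argument beyond the key idea you isolate (finite étale maps are finite-étale-locally in $\fold$) and defers the formal Kan-extension/descent bookkeeping to Bachmann--Hoyois, Proposition~C.11.

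One correction on the step you flag: the split part of the comma category is not literally cofinal, since no split object maps to a non-split one. What you actually need is that $F$ on the comma category is right Kan extended from the split part. At an object with backward leg $q$, this is descent on $\Corr^{\fold}(\Sm_S)$ for the sieve generated by the finite étale maps that split $q$. Your categories of factorizations through such splitting maps have initial objects, because fold maps are left-cancellable. That reduces the condition to finite étale descent of $F$ on $\Sm_S$.
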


The key idea is that finite étale maps are those maps which are (finite étale)-locally in $\fold$.
There is a similar statement for upgrading commutative algebras to normed algebras in the presence of finite étale descent, which is unfortunately a bit more technical to state.
See \cite[Corollary C.16]{BachmannHoyois_Norms} for the precise statement.

This leads to the basic tension for norm monoidal structures in motivic homotopy theory.

\begin{tension} \label{tension:no-easy-norms}
    For invariants which have finite étale descent, it is easy to upgrade a commutative algebra structure to a normed algebra structure.
    However, invariants coming from motivic homotopy theory usually do not have this kind of descent.
\end{tension}

We remark that a presheaf satisfies étale descent if and only if it satisfies both finite étale descent and Nisnevich descent.
See \cite[Theorem~B.6.4.1]{Lurie_SAG} for the analogous statement in the setting of spectral algebraic geometry.

One might hope that there are methods to resolve Tension~\ref{tension:no-easy-norms} in certain cases of interest.
As mentioned in Remark~\ref{remark:constructing-nalg-examples}, several such methods are developed already in \cite{BachmannHoyois_Norms}.
We'll see below that the theory of motivic infinite loop spaces provides another such method.
This will be our primary tool in proving Theorem~\ref{theorem:main}.

\section{Motivic Infinite Loop Space Theory} \label{section:loops}

The classical recognition principle for infinite loop spaces says the following:

\begin{theorem}[Recognition Principle, \cite{BoardmanVogt_EVERYTHING}, \cite{May_Iterated}, \cite{Segal_CatsCoh}]
    The following hold.
    \begin{enumerate}
        \item The functor $\Omega^\infty : \Spt \to \Ani_*$ admits a lift
            \begin{equation*}
                \begin{tikzcd}
                    & \CMon(\Ani) \ar[d] \\
                    \Spt \ar[ur, dashed, "\Omega^\infty"] \ar[r,"\Omega^\infty"'] & \Ani_*
                \end{tikzcd}
            \end{equation*}
        \item The lifted functor $\Omega^\infty$ participates in an adjunction
        \begin{equation*}
            \mathrm{B}^\infty : \CMon(\Ani) \rightleftarrows \Spt : \Omega^\infty
        \end{equation*}
        where the left adjoint $\mathrm{B}^\infty$ is (strong) symmetric monoidal.
        \item The adjunction $\mathrm{B}^\infty \dashv \Omega^\infty$ restricts to a symmetric monoidal equivalence
        \begin{equation*}
            \CMon(\Ani)^\mathrm{gp} \cong \Spt_{\geq 0}
        \end{equation*}
        between grouplike commutative monoids and connective spectra.
    \end{enumerate}
\end{theorem}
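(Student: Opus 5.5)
Since $\Spt$ is presentable, stable, and the product agrees with the coproduct on it, I will take the standard route via semiadditivity.

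\emph{Step (1).} In a semiadditive $\infty$-category $C$ with finite products, the forgetful functor $\CMon(C) \to C$ is an equivalence; this is the characterization in \cite{GGN}. Since $\Spt$ is stable, it is semiadditive, so every spectrum $E$ carries a canonical, essentially unique commutative monoid structure. Here the multiplication is the fold map $E \oplus E \simeq E \sqcup E \to E$. The functor $\Omega^\infty : \Spt \to \Ani$ is a right adjoint, so it preserves finite products. It therefore induces a functor $\CMon(\Spt) \to \CMon(\Ani)$, and composing with the inverse of $\CMon(\Spt) \xrightarrow{\simeq} \Spt$ gives the lift in (1).

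\emph{Step (2).} Both $\CMon(\Ani)$ and $\Spt$ are presentable. The lifted $\Omega^\infty$ preserves limits, because limits in $\CMon(\Ani)$ are computed underlying and $\Omega^\infty$ preserves limits. It is also accessible, since it preserves filtered colimits after restricting to bounded-below objects; alternatively, use that $\CMon(\Ani)$ is the $\infty$-category of finite-product-preserving presheaves on the Burnside category. The adjoint functor theorem then gives a left adjoint $\mathrm{B}^\infty$.

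For monoidality, I use the universal property from \cite{GGN}. The category $\CMon(\Ani) \simeq \Ani \otimes \CMon(\Ani)$ is the free presentable semiadditive $\infty$-category on a point. The category $\Spt = \Ani \otimes \Spt$ is the corresponding stabilization, and it is an idempotent algebra in $\mathrm{Pr}^L$. The unique colimit-preserving functor $\CMon(\Ani) \to \Spt$ sending the free monoid $\mathbb{N}$ on the point to the sphere $\mathbb{S}$ is the map of idempotent algebras $\CMon(\Ani) \to \CMon(\Ani) \otimes \Spt \simeq \Spt$. It is therefore symmetric monoidal, and I check that it is left adjoint to $\Omega^\infty$ by comparing mapping spaces out of $\mathbb{N}$.

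\emph{Step (3).} The left adjoint $\mathrm{B}^\infty$ factors through group completion $\CMon(\Ani) \to \CMon(\Ani)^{\mathrm{gp}}$, because $\Omega^\infty$ lands in grouplike monoids. The category $\CMon(\Ani)^{\mathrm{gp}}$ is the free presentable additive $\infty$-category on a point, again by \cite{GGN}. I then have to show that the restricted adjunction $\CMon(\Ani)^{\mathrm{gp}} \rightleftarrows \Spt_{\geq 0}$ is an equivalence. The essential input is that a grouplike $M$ admits arbitrary deloopings: $\mathrm{B}M$ is again a grouplike commutative monoid with $\Omega \mathrm{B} M \simeq M$. This is the group-completion statement, and for grouplike $M$ it is just the Segal condition for the bar construction.

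Iterating gives a connective spectrum $(M, \mathrm{B}M, \mathrm{B}^2M, \ldots)$, which provides the inverse to $\Omega^\infty$. One can check this on the unit and on colimit generators, since both sides preserve sifted colimits. Symmetric monoidality of the equivalence follows from Step (2).

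The main obstacle is Step (3), specifically verifying that the delooping $\Omega \mathrm{B}M \simeq M$ holds for grouplike $M$. If working purely formally, I would instead invoke Lurie's characterization \cite[Remark 5.2.6.26]{Lurie_HA} that grouplike $\mathbb{E}_\infty$-spaces are equivalent to connective spectra.
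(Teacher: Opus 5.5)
The paper gives no proof of this statement. It quotes it as classical background from Boardman--Vogt, May, and Segal, so there is nothing to match line by line. Your route is the modern $\infty$-categorical one:
\begin{itemize}
\item semiadditivity of $\Spt$ for (1);
\item the universal properties of $\CMon(\Ani)$ and $\Spt$ as idempotent algebras in $\mathrm{Pr}^{\mathrm{L}}$ from \cite{GGN} for (2);
\item Lurie's delooping theorem for (3).
\end{itemize}
The classical proofs instead build deloopings by hand with operads or $\Gamma$-spaces. What your route buys is that symmetric monoidality of $\mathrm{B}^\infty$ becomes formal, which is the genuinely hard part classically. Step (1) and the existence part of Step (2) are fine. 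For accessibility you do not need bounded-below objects: $\Omega^\infty$ preserves all filtered colimits because $\mathbb{S}$ is compact.

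There is, however, a concrete error. The free commutative monoid on a point in $\CMon(\Ani)$ is not the discrete monoid $\mathbb{N}$. It is the groupoid of finite sets, $\coprod_{n \geq 0} \mathrm{B}\Sigma_n$, which is the unit of $\CMon(\Ani)$ and the object at which the universal property of \cite{GGN} is evaluated.

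Read literally, your Step (2) is false. The group completion of discrete $\mathbb{N}$ is discrete $\mathbb{Z}$, so $\mathrm{B}^\infty(\mathbb{N}) \simeq \mathrm{H}\mathbb{Z}$, not $\mathbb{S}$. With the correct free object the argument goes through: $\mathrm{Map}(\coprod_n \mathrm{B}\Sigma_n, \Omega^\infty E) \simeq \Omega^\infty E \simeq \mathrm{Map}(\mathbb{S}, E)$, so Yoneda gives $\mathrm{B}^\infty(\coprod_n \mathrm{B}\Sigma_n) \simeq \mathbb{S}$. Hence the left adjoint coincides with the idempotent-algebra map and is symmetric monoidal.

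The same correction matters in Step (3). There, ``checking on the unit'' amounts to showing $(\coprod_n \mathrm{B}\Sigma_n)^{\mathrm{gp}} \simeq \Omega^\infty\mathbb{S}$. That is the Barratt--Priddy--Quillen theorem, which is normally \emph{deduced} from the recognition principle, so it cannot serve as an independent check. Instead, get the equivalence directly from your inverse $M \mapsto (M, \mathrm{B}M, \mathrm{B}^2M, \ldots)$, using three inputs:
\begin{itemize}
\item $\Omega\mathrm{B}M \simeq M$ for grouplike $M$;
\item $\mathrm{B}\Omega Y \simeq Y$ for connected pointed $Y$;
\item an Eckmann--Hilton comparison of the two $\mathbb{E}_\infty$-structures on $\Omega\mathrm{B}M$.
\end{itemize}

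The first two facts are not ``just the Segal condition''. They are descent statements in the $\infty$-topos $\Ani$: effectivity of groupoid objects, and effective epimorphisms being colimits of their \v{C}ech nerves. They are also distinct from the group completion theorem ($\Omega\mathrm{B}M \simeq M^{\mathrm{gp}}$ for non-grouplike $M$), which you do not need. Citing the result from \cite{Lurie_HA} that you mention packages all of this.
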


In other words, a very large class of spectra can be identified with anima equipped with a commutative monoid structure.
A motivic analogue of the above would be a statement along the lines of ``a suitably large class of motivic spectra can be identified with motivic spaces equipped with some algebraic structure''.
It turns out that the proper replacement for commutative monoids is motivic spaces with framed transfers.

\begin{definition}[See {\cite[Section 2.3 and Section 4]{EHKSY_deloop1}}]
    Let $p : \widetilde{X} \to X$ be a finite syntomic morphism of $S$-schemes.
    A \emph{framing} of $p$ is an equivalence $[\mathrm{L}_p] \cong 0$ in the algebraic $\mathrm{K}$-theory anima $\mathrm{K}(\widetilde{X})$ of $\widetilde{X}$.
    Here, we write $\mathrm{L}_p$ for the cotangent complex of $p$.

    Let $S$ be a scheme.
    We'll write $\Corr^\fr(\Sm_S)$ for the symmetric monoidal $\infty$-category of \emph{framed correspondences}.
    Objects are smooth $S$-schemes, and a morphism from $X$ to $Y$ is a correspondence
    \begin{equation*}
        \begin{tikzcd}[row sep=small]
            & \widetilde{X} \ar[dl,"p"'] \ar[dr,"f"] \\
            X & & Y
        \end{tikzcd}
    \end{equation*}
    of $S$-schemes where $p$ is finite syntomic and equipped with a framing $\alpha : [\mathrm{L}_p] \cong 0$.
    The tensor product in $\Corr^\fr(\Sm_S)$ is the Cartesian product of smooth $S$-schemes.
\end{definition}

\begin{remark}
    Note that a finite syntomic map is finite étale if and only if its cotangent complex is trivial.
    Thus, one might think of framed maps as maps which are potentially worse than finite étale, but in a controlled way.
\end{remark}

\begin{example}
    Recall (\cite[\href{https://stacks.math.columbia.edu/tag/0FKY}{Tag 0FKY}]{stacks-project}) that a morphism $p : \tilde X \to X$ is finite syntomic if and only if, Zariski-locally on the target, it can be written as the composite
    \begin{equation*}
        \Spec A/I \xrightarrow{i} \Spec A \xrightarrow{q} \Spec R
    \end{equation*}
    where $A = R[x_1, \ldots, x_r]$ is a polynomial algebra over $R$, and $I \subseteq A$ is generated by a regular sequence $f_1,\ldots, f_r \in A$.
    Given such a factorization, we can present the cotangent complex $\mathrm{L}_p$ as a two-term complex
    \begin{equation*}
        I/I^2 = \mathscr{N}_i \xrightarrow{\delta} i^* \Omega_q \cong (A/I)\{dx_1, \ldots, dx_r\}
    \end{equation*}
    where $i^* \Omega_q$ is in degree zero, and $\delta$ acts as
    \begin{equation*}
        g + I^2 \mapsto \sum_{i=1}^r \Bigl(\frac{\partial g}{\partial x_i} +I \Bigr) \, dx_i.
    \end{equation*}
    Note that $p$ is finite étale if and only if $\delta$ is an isomorphism.
    In any case, the choice of regular sequence $f_1, \ldots, f_r$ determines a basis for $\mathscr{N}_i$: it is freely generated over $A/I$ by $f_i + I^2$, $i = 1, \ldots, r$.
    Thus, we have equivalences
    \begin{align*}
        [\mathrm{L}_p] &\cong [i^* \Omega_q] - [\mathscr{N}_i] \\
        &\cong r[\mathcal{O}] - r[\mathcal{O}] \\
        &\cong 0
    \end{align*}
    in the algebraic $\mathrm{K}$-theory anima $\mathrm{K}(\Spec A/I)$.
    In summary, the choice of regular sequence $f_1, \ldots, f_r$ determines a framing of $p : \Spec A/I \to \Spec R$.
\end{example}

\begin{remark}
    Note that the morphisms in $\Corr^\fr(\Sm_S)$ are \emph{not} simply correspondences in $\Sm_S$ that satisfy a given property: the framing is \emph{extra data} that one needs to keep track of.
    Consequently, the construction of $\Corr^\fr(\Sm_S)$ is significantly more involved than that of $\Corr^{\fold}(\Sm_S)$ or $\Corr^{\fet}(\Sm_S)$.
    Heuristically, the issue is that one must specify the behavior of the framings under compositions of framed correspondences.
    In \cite[Section 4]{EHKSY_deloop1}, this is taken care of using a bookkeeping device called a \emph{labeling functor} for correspondences.
    See also \cite[Appendix B]{EHKSY_ModMGL}.
\end{remark}

\begin{definition}
    A \emph{motivic space with framed transfers} over $S$ is a presheaf
    \begin{equation*}
        F : \Corr^\fr(\Sm_S)^\op \to \Ani
    \end{equation*}
    whose restriction to $\Sm_S$ is a motivic space as in Definition~\ref{definition:motivic-spaces}.
    We write $\HH^\fr(S) \subseteq \PSh(\Corr^\fr(\Sm_S))$ for the full subcategory spanned by motivic spaces with framed transfers over $S$.
\end{definition}

Again, the inclusion $\HH^\fr(S) \subseteq \PSh(\Corr^\fr(\Sm_S))$ admits a left adjoint $\mathrm{L}_\mot$ called \emph{motivic localization}.
In particular, any presheaf on $\Corr^\fr(\Sm_S)$ gives rise to a motivic space with framed transfers.

Let $F : \Corr^\fr(\Sm_S)^\op \to \Ani$ be a presheaf.
We will usually use the notation
\begin{align*}
    X &\mapsto F(X)\\
    \Bigl(Y \xleftarrow{f} X \Bigr) &\mapsto \Bigl(F(Y) \xrightarrow{f^*} F(X) \Bigr) \\
    \Bigl(\widetilde{X} \xrightarrow{p} X, \quad \alpha : [\mathrm{L}_p] \xrightarrow{\simeq} 0 \Bigr) &\mapsto \Bigl(F (\widetilde{X}) \xrightarrow{(p,\alpha)_*} F(X) \Bigr)
\end{align*}
for the action of $F$ on the objects, forwards morphisms, and backwards morphisms in $\Corr^\fr(\Sm_S)$, respectively.\footnote{
    Note that the scheme $\widetilde{X}$ in the middle of a framed correspondence in $\Corr^\fr(\Sm_S)$ is \emph{not} required to be smooth over $S$.
    Consequently, morphisms in $\Corr^\fr(\Sm_S)$ do not necessarily factor as a composite of a backwards morphism followed by a forwards morphism.
    For sake of exposition, we'll ignore this technical point and continue to describe presheaves on $\Corr^{\fr}(\Sm_S)$ in terms of their actions on forwards morphisms and backwards morphisms.
}
We refer to $(p,\alpha)_*$ as the \emph{transfer} along the framed map $(p,\alpha)$.

\begin{example} \label{example:big-examples}
    For $X \in \Sm_S$, we write
    \begin{itemize}
        \item $\Vect_S(X)$ for the anima of vector bundles on $X$ (\cite[Section 5]{HJNTY_KGL}),
        \item $\FSyn_S(X)$ for the anima of finite syntomic $X$-schemes (\cite[Section 3.3]{EHKSY_ModMGL}),
        \item $\Vect^\sym_S(X)$ for the anima of vector bundles on $X$ equipped with a non-degenerate symmetric bilinear form (\cite[Section 7]{HJNY_KO}), and
        \item $\FSyn^\mathrm{or}_S(X)$ for the anima of finite syntomic $X$-schemes with an orientation, \emph{i.e.} a trivialization of the dualizing sheaf over $X$ (\cite[Section 3.3]{EHKSY_ModMGL}).
    \end{itemize}
    These can be arranged into a diagram
    \begin{equation*}
        \begin{tikzcd}
            \FSyn^\mathrm{or}_S \ar[d] \ar[r] & \Vect^\sym_S \ar[d] \\
            \FSyn_S \ar[r] & \Vect_S
        \end{tikzcd}
    \end{equation*}
    of commutative algebras in $\PSh(\Sm_S)$.
    The commutative algebra structures on $\FSyn_S$ and $\FSyn^\mathrm{or}_S$ are coming from the Cartesian product of finite syntomic schemes, while the commutative algebra structures on $\Vect_S$ and $\Vect^\sym$ are given by tensor product of vector bundles.
    The vertical morphisms are the forgetful maps, while the horizontal maps send a finite syntomic scheme to its structure sheaf.

    We claim that this diagram upgrades to one of commutative algebras in $\PSh(\Corr^\fr(\Sm_S))$.
    Here, we content ourselves with describing the transfers on $\Vect_S$ and $\FSyn^\mathrm{or}_S$, and refer to the above references for further details.
    See also \cite[Section 10]{HJNY_KO}.

    Let $p : \widetilde{X} \to X$ be a finite syntomic map and let $\alpha : [\mathrm{L}_p] \cong 0$ be a framing.
    Given this data, we must specify transfers
    \begin{equation*}
        \Vect_S (\widetilde{X}) \to \Vect_S(X) \qquad\text{and}\qquad \FSyn^\mathrm{or}_S (\widetilde{X}) \to \FSyn^\mathrm{or}_S(X)
    \end{equation*}
    along $(p,\alpha)$.
    On $\Vect_S$, the transfer is given by the pushforward of quasi-coherent sheaves.
    Note that $p$ is finite flat, so this pushforward actually sends vector bundles to vector bundles.
    Note also that this doesn't actually depend on the framing $\alpha$.

    The transfer on $\FSyn^\mathrm{or}_S$ is only slightly more complicated.
    Note that $\mathrm{det}(\mathrm{L}_p) \cong \omega_{\widetilde{X}/X}$.
    (See \cite[Lemma 7.1]{HJNY_KO} for a reference in this generality.)
    Let $q : U \to \widetilde{X}$ be finite syntomic and let $\phi : \omega_{U/X} \cong \mathcal{O}_U$ be a trivialization of the dualizing sheaf.
    The transfer along $(p,\alpha)$ sends this to $U \xrightarrow{q} \widetilde{X} \xrightarrow{p} X$ with the orientation
    \begin{equation*}
        \omega_{U/X} \cong \omega_{U/ \widetilde{X}} \otimes q^* (\omega_{\widetilde{X}/X}) \xrightarrow[\phi \otimes \det(\alpha)]{\simeq} \mathcal{O}_U.
    \end{equation*}
    Note that this depends on the framing $\alpha$, but only via the determinant.
\end{example}

We can now state the recognition principle for motivic infinite loop spaces.

\begin{theorem}[Motivic Recognition Principle, \cite{EHKSY_deloop1}] \label{theorem:recognition}
    Let $S$ be a scheme.
    \begin{enumerate}
        \item The functor $\Omega^\infty_\mathbb{P} : \SH(S) \to \HH(S)_*$ admits a lift
            \begin{equation*}
                \begin{tikzcd}
                    & \HH^\fr(S) \ar[d] \\
                    \SH(S) \ar[ur, dashed, "\Omega^\infty_\mathbb{P}"] \ar[r,"\Omega^\infty_\mathbb{P}"'] & \HH(S)_*
                \end{tikzcd}
            \end{equation*}
        \item The lifted functor $\Omega^\infty_\mathbb{P}$ participates in an adjunction
        \begin{equation*}
            \mathrm{B}^\infty_\mathbb{P} : \HH^\fr(S) \rightleftarrows \SH(S) : \Omega^\infty_\mathbb{P}
        \end{equation*}
        where the left adjoint $\mathrm{B}^\infty_\mathbb{P}$ is (strong) symmetric monoidal.
        \item If $S$ is the spectrum of a perfect field, then the adjunction $\mathrm{B}^\infty_\mathbb{P} \dashv \Omega^\infty_\mathbb{P}$ restricts to a symmetric monoidal equivalence
        \begin{equation*}
            \HH^\fr(S)^\mathrm{gp} \cong \SH(S)^\mathrm{veff}
        \end{equation*}
        between grouplike motivic spaces with framed transfers and very effective motivic spectra.
    \end{enumerate}
\end{theorem}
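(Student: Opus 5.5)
The plan follows the strategy of \cite{EHKSY_deloop1}: build $\mathrm{B}^\infty_\mathbb{P}$ through an intermediate category of framed motivic spectra, $\SH^\fr(S)$, and then compare $\SH^\fr(S)$ with $\SH(S)$. We define $\SH^\fr(S)$ as the $\mathbb{P}^1$-stabilization of the pointed version $\HH^\fr(S)_*$. The framed correspondence category $\Corr^\fr(\Sm_S)$ is symmetric monoidal under the Cartesian product. Day convolution therefore makes $\PSh(\Corr^\fr(\Sm_S))$ presentably symmetric monoidal. Since $\mathrm{L}_\mot$ is compatible with products by smooth schemes, $\HH^\fr(S)$ and $\SH^\fr(S)$ inherit presentably symmetric monoidal structures. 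The graph functor $\gamma : \Sm_S \to \Corr^\fr(\Sm_S)$ sends $f$ to the span with trivial backwards leg. It is symmetric monoidal, so left Kan extension gives a symmetric monoidal left adjoint $\gamma^* : \HH(S)_* \to \HH^\fr(S)$, and after inverting $\mathbb{P}^1$ a symmetric monoidal left adjoint $\gamma^* : \SH(S) \to \SH^\fr(S)$. Its right adjoint $\gamma_*$ simply forgets transfers.

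The key step is the reconstruction theorem: $\gamma^* : \SH(S) \to \SH^\fr(S)$ is an equivalence. To prove it, we reduce via the generators $\Sigma^\infty_\mathbb{P} X_+$ to showing that the unit is an equivalence on these generators. This amounts to computing the framed suspension spectrum $\gamma_*\gamma^*\Sigma^\infty_\mathbb{P}X_+$. We would use the equivalence between framed correspondences and Voevodsky's equationally framed correspondences, after $\mathbb{A}^1$-localization and group completion. Voevodsky's framed correspondences are built from maps into the spheres $T^n = \mathbb{A}^n/(\mathbb{A}^n\smallsetminus 0)$, and a Pontryagin--Thom style argument identifies $\Omega^\infty_{\mathbb{P}}\Sigma^\infty_\mathbb{P}$ with them. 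For a general base $S$ we would first reduce to fields, using continuity and the gluing theorem, to reach the Garkusha--Panin theorem.

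I expect this to be the main obstacle. It is the deepest input and needs Garkusha--Panin's computation together with a careful comparison of framings $[\mathrm{L}_p]\simeq 0$ with equational framings. That comparison is exactly where the labeling-functor bookkeeping enters.

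Granting the reconstruction theorem, the rest is formal.

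Part (1): define the lift as $\Omega^\infty$ of $\SH^\fr(S)\to\HH^\fr(S)$ precomposed with $\gamma^*$. Composing with the functor that forgets transfers recovers $\Omega^\infty_\mathbb{P}$, because $\gamma_*$ commutes with $\Omega^\infty$.

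Part (2): $\mathrm{B}^\infty_\mathbb{P}$ is $(\gamma^*)^{-1}\circ\Sigma^\infty_{\mathbb{P},\fr}$, a composite of symmetric monoidal left adjoints.

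Part (3): over a perfect field, we would show that $\Sigma^\infty_{\mathbb{P},\fr}$ is fully faithful on grouplike objects, with essential image generated under colimits by the $\Sigma^\infty_{\mathbb{P},\fr}X_+$, which is the very effective part. Full faithfulness is a cancellation theorem: $\mathbb{G}_m$-loops and then $\mathbb{P}^1$-loops are invertible on grouplike $\mathbb{A}^1$-invariant framed sheaves. This is proved via strict $\mathbb{A}^1$-invariance of framed presheaves with transfers, in the style of Ananyevskiy--Garkusha--Panin, and uses perfectness. Essential surjectivity then follows, because $\SH(S)^\mathrm{veff}$ is generated under colimits and extensions by the $\Sigma^\infty_\mathbb{P}X_+$, and both $\mathrm{B}^\infty_\mathbb{P}$ and $\Omega^\infty_\mathbb{P}$ preserve sifted colimits in this range.
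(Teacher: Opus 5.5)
Your outline follows the architecture of the proof in \cite{EHKSY_deloop1}, which the paper cites instead of proving the theorem. Framed motivic spectra and the reconstruction theorem $\SH(S)\simeq\SH^\fr(S)$ (via Voevodsky's lemma, Garkusha--Panin, and the comparison of equational framings with framings $[\mathrm{L}_p]\simeq 0$) give (1) and (2), and the framed cancellation theorem over perfect fields, proved through strictly $\mathbb{A}^1$-invariant sheaves, gives (3) --- exactly the ingredient the paper singles out. One point to sharpen: for a general base $S$, reducing reconstruction to fields needs the gluing theorem for \emph{framed} motivic spectra, which is a separate nontrivial result of Hoyois and does not follow from Morel--Voevodsky gluing for $\SH$ alone.
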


The proof of Theorem~\ref{theorem:recognition} in \cite{EHKSY_deloop1} is difficult and builds upon the work of Ananyevskiy, Garkusha, Neshitov, and Panin in \cite{GP_FramedMotivesAfterVoevodsky}, \cite{GP_HISheavesFramedTransfers}, \cite{AGP_cancellation}, and \cite{GNP_FramedMotiveRelativeSphere}, which ultimately build upon \cite{Voevodsky_FramedNotes}.

\begin{remark}
    We note that the restriction to perfect fields in Theorem~\ref{theorem:recognition} (3) comes from one of its main ingredients, the cancellation theorem \cite[Theorem~3.5.8]{EHKSY_deloop1} for framed correspondences.
    There, the restriction arises from the use of the theory of strictly $\mathbb{A}^1$-invariant sheaves as developed in \cite{Morel_AlgTop}.
    See \cite{Bachmann_Cancellation} for more on the cancellation theorem, and \cite{Bachmann_Strict} for more on strictly $\mathbb{A}^1$-invariant sheaves.
\end{remark}

For the purposes of the present article, the point of Theorem~\ref{theorem:recognition} is that many motivic spectra of interest admit geometric descriptions via $\mathrm{B}^\infty_\mathbb{P}$. 

\begin{example}
    Let $S$ be a scheme.
    Applying $\mathrm{B}^\infty_\mathbb{P}$ to (the motivic localization of) the diagram from Example~\ref{example:big-examples}, we obtain
    \begin{equation*}
        \begin{tikzcd}
            \mathrm{MSL}_S \ar[d] \ar[r] & \mathrm{ko}_S \ar[d] \\
            \mathrm{MGL}_S \ar[r] & \mathrm{kgl}_S
        \end{tikzcd}
    \end{equation*}
    as a diagram in $\CAlg(\SH(S))$.
    This is a combination of results from \cite{EHKSY_ModMGL}, \cite{HJNTY_KGL}, \cite{HJNTY_KGL}, and \cite{Bachmann_veff}.
    See \cite[Theorem 10.1 and Remark 10.2]{HJNY_KO} for a summary and more specific references.
    (Note that we can drop the assumption that $2 \in \mathcal{O}(S)^\times$ by \cite[Remark~7.3]{HJNY_KO} and \cite{CHN}.)
\end{example}

\begin{remark}
    It is interesting to note that the additive structures and the multiplicative structures that arise naturally in motivic homotopy theory appear to be described using pushforwards along different sorts of maps.
    The gap between framed pushforwards and finite étale pushforwards means that, for now, it is not clear whether there is a nice theory of motivic units and Picard spectra.
\end{remark}

\section{Norms and Transfers, \texorpdfstring{$\mathrm{K}$}{K}-theory and Cobordism} \label{section:end-game}

Comparing the previous section to the one before it, one is led to wonder:

\begin{question}
    Is the motivic recognition principle compatible with norms?
\end{question}

We give an affirmative answer in \cite{Shin_NormsTransfers}.

\begin{theorem}[\cite{Shin_NormsTransfers}]
    The assignment $X \mapsto \Corr^\fr(\Sm_X)$ refines to a norm monoidal $\infty$-category $\Corr^\fr(\Sm_{({-})}) : \Corr^\fet(\Sm_S)^\op \to \Cat_\infty$ over $S$.
\end{theorem}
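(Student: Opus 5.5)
The plan is to build the norm functors on $\Corr^\fr(\Sm_{(-)})$ from Weil restriction, exactly as for $\Sm_{(-)}$ in the example above, and to check that Weil restriction along a finite étale map $p : \widetilde{X} \to X$ is compatible with framed correspondences. Concretely, on objects the norm $p_\otimes : \Corr^\fr(\Sm_{\widetilde{X}}) \to \Corr^\fr(\Sm_X)$ sends $Y \mapsto \mathrm{R}_p(Y)$. On a framed correspondence $Y \xleftarrow{q} \widetilde{Y} \xrightarrow{g} Y'$ over $\widetilde{X}$, it returns $\mathrm{R}_p(Y) \xleftarrow{\mathrm{R}_p(q)} \mathrm{R}_p(\widetilde{Y}) \xrightarrow{\mathrm{R}_p(g)} \mathrm{R}_p(Y')$. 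Here $\mathrm{R}_p$ is taken in qcqs algebraic spaces if needed, since the middle term need not be smooth.

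The geometric inputs come first.
\begin{enumerate}
\item Weil restriction along finite étale $p$ preserves finite syntomic morphisms. Étale-locally on $X$ the map $p$ is a fold map, so $\mathrm{R}_p$ becomes a finite product, and finite syntomicity is étale local.
\item There is a natural identification of the cotangent complex of $\mathrm{R}_p(q)$ with a pushforward-type construction of $\mathrm{L}_q$. Pulling back along the evaluation map $\widetilde{X}\times_X \mathrm{R}_p(\widetilde{Y}) \to \widetilde{Y}$ and then pushing forward along the finite étale projection gives $\mathrm{L}_{\mathrm{R}_p(q)}$.
\item Consequently, a trivialization $[\mathrm{L}_q]\simeq 0$ in $\mathrm{K}(\widetilde{Y})$ induces a trivialization of $[\mathrm{L}_{\mathrm{R}_p(q)}]$. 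This uses the finite étale pushforward (transfer) on $\mathrm{K}$, which is additive and natural.
\end{enumerate}
Weil restriction is a right adjoint and so preserves fiber products. It therefore respects composition of spans, and the induced framings are compatible with composition because the cotangent complex transitivity sequence is natural under this construction.

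Next comes the $\infty$-categorical packaging. I would use the labeling-functor formalism of \cite[Section 4]{EHKSY_deloop1}, in which $\Corr^\fr$ is built from correspondences labeled by the functor sending a finite syntomic map to the anima of paths $[\mathrm{L}_p]\simeq 0$ in $\mathrm{K}$. The norm construction on $\Sm_{(-)}$ already gives a functor $\Corr^\fet(\Sm_S)^\op \to \Cat_\infty$ by \cite[Section 6.1]{BachmannHoyois_Norms}. This should upgrade to the $\infty$-categories of correspondences of (possibly nonsmooth) schemes via functoriality of $\mathscr{S}\mathrm{pan}$ in pullback-preserving functors. The task is then to enhance the labeling functor so that the norm maps on the underlying span categories lift to the labeled ones. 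To do this I would show that the $\mathrm{K}$-theoretic data is itself a normed object: the assignment $X \mapsto \mathrm{K}$ of finite syntomic $X$-schemes carries finite étale pushforwards coherently, via the functor $\Corr^\fet \to \Cat_\infty$ given by $\mathrm{Perf}$ with pushforward along finite étale maps. Product preservation holds because $\mathrm{R}_\nabla$ along a fold is the cartesian product, and the cartesian product is the tensor product on $\Corr^\fr$.

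I expect the main obstacle to be this coherence step. Producing the framing on $\mathrm{R}_p(q)$ by hand is easy. The hard part is making the identification $\mathrm{L}_{\mathrm{R}_p(q)} \simeq (\text{pushforward of } \mathrm{L}_q)$ natural enough that it assembles into a functor out of $\Corr^\fet(\Sm_S)^\op$ compatibly with composition of framed spans and with base change. I would first try to realize both sides as values of a single functor on a suitable span category of pairs (finite étale map, finite syntomic map), and then read off the coherence from the functoriality of $\mathscr{S}\mathrm{pan}$ constructions.
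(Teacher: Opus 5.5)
Your pointwise constructions are correct, and they are what the paper's construction produces. The norm is Weil restriction, $\mathrm{L}_{\mathrm{R}_p(q)} \simeq \pi_* e^* \mathrm{L}_q$ (since $\pi_*$ is left adjoint to $\pi^*$ for finite étale $\pi$), and the framing is moved by $e^*$ followed by the finite étale transfer on $\mathrm{K}$. Your use of labeling functors, with $\mathrm{K}$ applied to $\mathrm{Perf}$-level data, also points in the paper's direction. But the theorem is exactly the coherence statement, and there your proposal only records a hope (``realize both sides as values of a single functor on a span category of pairs''). Such a functor would have to encode the exponential diagrams relating $\mathrm{R}_p$, the evaluation maps and the finite étale projections. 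It would have to do so compatibly with the transitivity triangles used to compose framed spans, with base change, and with $\mathrm{R}_{p'} \circ \mathrm{R}_p \simeq \mathrm{R}_{p' \circ p}$. Knowing that $\mathrm{Perf}$, and hence $\mathrm{K}$, has coherent finite étale pushforwards gives only additive transfers over a fixed base, not this interaction with Weil restriction. So there is a genuine gap, located at the step you flag yourself.

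The missing idea is the one the paper uses to avoid writing these coherences down at all: Slogan~\ref{slogan:norms-by-descent}, i.e.\ \cite[Proposition C.11 and Corollary C.16]{BachmannHoyois_Norms}. The cotangent complex satisfies étale descent. So the symmetric monoidal structure on that part of the data (external direct sum under products) extends automatically and coherently to finite étale norms. Your formulas are simply what comes out, because étale-locally $p$ is a fold map, over which the norm is a product and cotangent complexes and framings just add. The one ingredient without étale descent is $\mathrm{K}$-theory. Hence the trick cannot be applied to the anima of framings directly, and ``the $\mathrm{K}$-theoretic data is normed'' is not available by descent. This is the role of labeling functors. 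Framings are paths from $[\mathrm{L}_q]$ to $0$ in $\mathrm{K}$, so they arise from the cotangent-complex data by applying $\mathrm{K}$ afterwards. The labeling formalism is robust enough that norms obtained by descent on that input pass to $\Corr^\fr(\Sm_{({-})})$, even though $\mathrm{K}$ itself has no étale descent. A minor point: finite-product preservation means $\Corr^\fr(\Sm_{X \sqcup X'}) \simeq \Corr^\fr(\Sm_X) \times \Corr^\fr(\Sm_{X'})$, which is a separate easy check. The identification of $\mathrm{R}_\nabla$ with the cartesian product is instead what makes the norms restrict to the given symmetric monoidal structure.
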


\begin{remark}
    The construction of $\Corr^\fr(\Sm_{({-})})$ as a norm monoidal $\infty$-category is again rather technical due to the need to keep track of framings as paths in the algebraic $\mathrm{K}$-theory anima.
    In short, the idea can be boiled down to two pieces of input.
    First, norm structures are easy to produce via étale descent (see Slogan~\ref{slogan:norms-by-descent}), which the cotangent complex satisfies.
    Second, the technology of labeling functors is robust enough to overcome the fact that algebraic $\mathrm{K}$-theory itself does not satisfy étale descent.
\end{remark}

With this in hand, the following is almost immediate.

\begin{theorem}[\cite{Shin_NormsTransfers}]\label{theorem:recognition-is-normed}
    Let $S$ be a scheme.
    \begin{enumerate}
        \item The assignment
            \begin{equation*}
                X \mapsto \Bigl(\PSh(\Corr^\fr(\Sm_X)) \xrightarrow{\mathrm{L}_\mot} \HH^\fr(X) \xrightarrow{\mathrm{B}^\infty_\mathbb{P}} \SH(X) \Bigr)
            \end{equation*}
            can be refined to a diagram
            \begin{equation*}
                \PSh(\Corr^\fr(\Sm_{({-})})) \xrightarrow{\mathrm{L}_\mot} \HH^\fr \xrightarrow{\mathrm{B}^\infty_\mathbb{P}} \SH
            \end{equation*}
            of norm monoidal $\infty$-categories over $S$.
            In particular, the functors $\mathrm{L}_\mot$ and $\mathrm{B}^\infty_\mathbb{P}$ both preserve normed algebra structures.
        \item If $S$ is the spectrum of a perfect field, we have an equivalence $\NAlg(\HH^\fr(S)^\mathrm{gp}) \cong \NAlg(\SH(S)^\mathrm{veff})$.
    \end{enumerate}
\end{theorem}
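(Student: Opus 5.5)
The plan is to bootstrap everything from the norm monoidal $\infty$-category $\Corr^\fr(\Sm_{({-})})$ of the preceding theorem, in the same way that \cite[Section~6.1]{BachmannHoyois_Norms} passes from $\Sm_{({-})}$ to $\SH$ in Proposition~\ref{prop:SH-is-normed}.

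\textbf{Step 1.} Apply $\PSh$, left Kan extending the norm functors. For a finite étale $p : \widetilde{X} \to X$, the norm $p_\otimes : \Corr^\fr(\Sm_{\widetilde{X}}) \to \Corr^\fr(\Sm_X)$ extends uniquely to a sifted-colimit-preserving symmetric monoidal functor on presheaves. The pullbacks $f^*$ extend to colimit-preserving functors. Because the extensions are functorial in the source $\infty$-category, this gives a functor $\Corr^\fet(\Sm_S)^\op \to \Cat_\infty$. It preserves finite products because $\PSh(\Corr^\fr(\Sm_{X_1 \sqcup X_2})) \simeq \PSh(\Corr^\fr(\Sm_{X_1})) \times \PSh(\Corr^\fr(\Sm_{X_2}))$.

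\textbf{Step 2.} Descend this structure to $\HH^\fr$. Following \cite[Section~6.1]{BachmannHoyois_Norms}, it suffices to check that each $p_\otimes$ and each $f^*$ sends motivic equivalences to motivic equivalences. Then the localization $\mathrm{L}_\mot$ is compatible with the structure, and $\HH^\fr$ inherits a norm monoidal structure with $\mathrm{L}_\mot$ as a natural transformation. Since $p_\otimes$ preserves sifted colimits but not all colimits, the generating equivalences must be handled with care. These are the $\mathbb{A}^1$-projections $Y \times \mathbb{A}^1 \to Y$ and the Nisnevich covering sieves, or \v{C}ech nerves, on representables. Using the Bachmann--Hoyois lemma about functors preserving sifted colimits, one reduces to showing the following: Weil restriction $\mathrm{R}_p$ along finite étale $p$ sends $\mathbb{A}^1$-homotopy equivalences to $\mathbb{A}^1$-homotopy equivalences, and Nisnevich covers to Nisnevich covers. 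Both statements are known for $\Sm$ \cite{BachmannHoyois_Norms}. Since the norm on $\Corr^\fr$ acts on underlying schemes by Weil restriction, the same geometric facts carry over. Nisnevich sheafification and $\mathbb{A}^1$-invariance can be detected after forgetting transfers, because the forgetful functor $\HH^\fr(X) \to \HH(X)$ is conservative and commutes with $\mathrm{L}_\mot$.

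\textbf{Step 3.} Treat $\mathrm{B}^\infty_\mathbb{P}$. On representables, $\mathrm{B}^\infty_\mathbb{P}$ sends $h^\fr(Y)$ to $\Sigma^\infty_\mathbb{P} Y_+$. It also commutes with $f^*$, since the recognition adjunction is natural in the base. I would build $\mathrm{B}^\infty_\mathbb{P}$ as the unique sifted-colimit-preserving extension of the functor $\Corr^\fr(\Sm_X) \to \SH(X)$, namely $Y \mapsto \Sigma^\infty_\mathbb{P} Y_+$. This in turn comes from a map of norm monoidal $\infty$-categories $\Corr^\fr(\Sm_{({-})}) \to \SH$, which exhibits the framed transfers on $\Sigma^\infty_\mathbb{P}$ in the sense of \cite{EHKSY_deloop1}. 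Compatibility with norms then reduces to representables, where it reads $\Sigma^\infty_\mathbb{P} \mathrm{R}_p(Y)_+ \simeq p_\otimes \Sigma^\infty_\mathbb{P} Y_+$. This holds by the characterization of $p_\otimes$ on $\SH$ given after Proposition~\ref{prop:SH-is-normed}. The remaining work is to check that this identification is natural in framed correspondences and not merely in $\Sm$-morphisms. I expect this to be the main obstacle. It requires showing that the Ambidexterity and Gysin-type maps giving the framed transfers in $\SH$ \cite{EHKSY_deloop1} commute with norms, which is the technical heart of \cite{Shin_NormsTransfers}. The statement that $\mathrm{L}_\mot$ and $\mathrm{B}^\infty_\mathbb{P}$ preserve normed algebras then follows formally, since a morphism of norm monoidal $\infty$-categories induces a functor on sections that preserves the condition that each $f^\sharp$ is an equivalence.

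\textbf{Step 4.} Prove part (2). Over a perfect field, Theorem~\ref{theorem:recognition}(3) gives that $\mathrm{B}^\infty_\mathbb{P}$ restricts to an equivalence $\HH^\fr(X)^\mathrm{gp} \simeq \SH(X)^\mathrm{veff}$. One needs this for all $X$ in $\Sm_S$, or at least needs normed algebras over $S$ to see only the fiber over $S$ up to the norms. Both subcategories are stable under $f^*$ and under $p_\otimes$: very effective spectra are preserved because $p_\otimes$ preserves sifted colimits and sends generators to generators, and grouplikeness is preserved correspondingly. So both form norm monoidal subcategories, and $\mathrm{B}^\infty_\mathbb{P}$ restricts to a map between them. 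By the definition of normed algebras, that map is an equivalence on $\NAlg$ as soon as it is a fiberwise equivalence over each $X \in \Sm_S$. Here $X$ is smooth over a perfect field but is not a field itself, so I would instead use the essentially smooth version of the equivalence \cite{EHKSY_deloop1}, or reduce via continuity and Nisnevich-local arguments to points of $X$, whose residue fields are finitely generated extensions of a perfect field.
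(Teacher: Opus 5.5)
Your Steps 1 and 2 follow the Bachmann--Hoyois pattern and are fine in outline, with one technical correction. Work with $\mathcal{P}_\Sigma(\Corr^\fr(\Sm_{(-)}))$ rather than all presheaves. Since $\Corr^\fr(\Sm_{X_1\sqcup X_2})\simeq\Corr^\fr(\Sm_{X_1})\times\Corr^\fr(\Sm_{X_2})$, your splitting of $\PSh$ is false. Also, a sifted-colimit-preserving extension is determined by its values on representables only on $\mathcal{P}_\Sigma$.

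\textbf{Step 3 has a genuine gap.} The objectwise identification $p_\otimes\Sigma^\infty_\mathbb{P}Y_+\simeq\Sigma^\infty_\mathbb{P}\mathrm{R}_p(Y)_+$ says nothing about naturality in framed correspondences. That naturality is the whole content of the claim about $\mathrm{B}^\infty_\mathbb{P}$, and you leave it open as a hoped-for compatibility of Gysin transfers with norms. No such analysis is needed. Grant, as in your Step 2, that $\Sm_{(-)}\to\Corr^\fr(\Sm_{(-)})$ is a map of norm monoidal $\infty$-categories. Then you can $\mathbb{P}^1$-stabilize $\HH^\fr$ exactly as Bachmann--Hoyois pass from $\HH(-)_*$ to $\SH$; the norm of $\mathbb{P}^1$ is the Thom space of the vector bundle $\mathrm{R}_p\mathbb{A}^1$, hence invertible. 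This yields a norm monoidal $\SH^\fr$ with norm monoidal functors $\Sigma^\infty_\fr\colon\HH^\fr\to\SH^\fr$ and $\gamma^*\colon\SH\to\SH^\fr$. The functor $\gamma^*$ is an equivalence on each fiber by the reconstruction theorem $\SH(X)\simeq\SH^\fr(X)$ (\cite{EHKSY_deloop1}, extended to arbitrary bases by Hoyois). It is therefore an equivalence of norm monoidal $\infty$-categories, and $\mathrm{B}^\infty_\mathbb{P}\simeq(\gamma^*)^{-1}\circ\Sigma^\infty_\fr$ is norm monoidal for free. This is why the paper calls the result ``almost immediate'': all the difficulty lies in the norm structure on $\Corr^\fr(\Sm_{(-)})$, not in transfers on $\SH$.

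\textbf{Step 4 also has a gap.} You reduce part (2) to fiberwise equivalences $\HH^\fr(X)^{\mathrm{gp}}\simeq\SH(X)^{\mathrm{veff}}$ for every $X\in\Sm_S$. That needs the recognition principle over positive-dimensional bases, which \cite{EHKSY_deloop1} does not provide: cancellation and Morel's theory of strictly $\mathbb{A}^1$-invariant sheaves are field-level inputs. Your suggested workarounds are not carried out, and they are not needed.

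Your parenthetical alternative is the right one: only the fiber over $S$ matters. A normed algebra is cocartesian over $\Sm_S^\op$, so $E_X\simeq\pi_X^*E_S$ for the smooth structure map $\pi_X\colon X\to S$. Since $f_\sharp$ commutes with $\mathrm{B}^\infty_\mathbb{P}$ for smooth $f$, you get $f^*\Omega^\infty_\mathbb{P}\simeq\Omega^\infty_\mathbb{P}f^*$. Hence the fiberwise right adjoint $\Omega^\infty_\mathbb{P}$, which is lax norm monoidal via the mates $p_\otimes\Omega^\infty_\mathbb{P}\to\Omega^\infty_\mathbb{P}p_\otimes$, carries normed algebras to normed algebras. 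So $\mathrm{B}^\infty_\mathbb{P}\dashv\Omega^\infty_\mathbb{P}$ lifts to $\NAlg$, with unit and counit at $X$ given by $\pi_X^*$ of those at $S$.

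Read $\NAlg(\HH^\fr(S)^{\mathrm{gp}})$ and $\NAlg(\SH(S)^{\mathrm{veff}})$ as normed algebras whose underlying object over $S$ is grouplike, resp.\ very effective. Then Theorem~\ref{theorem:recognition}~(3) at $S$ alone makes the unit and counit equivalences. This is what ``inherited directly'' means. It also removes the need for your claims that $p_\otimes$ preserves grouplike and very effective objects. Those claims were not adequately justified anyway: $\SH^{\mathrm{veff}}$ is generated under all colimits and extensions, not sifted colimits, and no argument was given for grouplike objects.
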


In other words, when $S$ is the spectrum of a perfect field, we can identify certain normed algebras in $\SH(S)$ with certain motivic spaces equipped with finite étale norms and framed transfers.
Note that the perfectness assumption in Theorem~\ref{theorem:recognition-is-normed}~(2) is inherited directly from Theorem~\ref{theorem:recognition}~(3).

Finally, we are ready to prove our main theorem.

\begin{proof}[Proof Sketch of Theorem~\ref{theorem:main}]
    Consider again the diagram
    \begin{equation*}
        \begin{tikzcd}
            \FSyn^\mathrm{or}_S \ar[d] \ar[r] & \Vect^\sym_S \ar[d] \\
            \FSyn_S \ar[r] & \Vect_S
        \end{tikzcd}
    \end{equation*}
    in $\CAlg(\PSh(\Corr^\fr(\Sm_S)))$ from Example~\ref{example:big-examples}.
    Since each presheaf in this diagram satisfies étale descent, Slogan~\ref{slogan:norms-by-descent} tells us that this \emph{automatically} upgrades to a diagram of \emph{normed} algebras in $\PSh(\Corr^\fr(\Sm_S))$.
    By Theorem~\ref{theorem:recognition-is-normed}~(1), the composite $\mathrm{B}^\infty_\mathbb{P} \mathrm{L}_\mot$ preserves normed algebra structures, so we win.
\end{proof}

\begin{remark}
    Note that we have \emph{not} proven that $\mathrm{KO}_S$ itself has a normed algebra structure.
    One might hope that this holds since $\mathrm{KO}_S \cong \mathrm{ko}_S[\tilde{\beta}^{-1}]$ where $\tilde{\beta}$ is a certain element known as the \emph{hermitian Bott element}.
    Unfortunately, it is not the case that inverting elements in a normed algebra results in another normed algebra.
    For instance, one can show that the $\eta$-periodic sphere spectrum $\mathbf{1}_S[\eta^{-1}]$ cannot be a normed algebra, despite the fact that the sphere spectrum $\mathbf{1}_S$ is the initial normed algebra in $\SH(S)$.
    
    To show that $\mathrm{KO}_S$ is also normed, one might hope to use \cite[Proposition 12.6]{BachmannHoyois_Norms}.
    This involves computing norms of the hermitian Bott element.
    So far, it appears that the norms for $\mathrm{ko}_S$ are easiest to compute using $\mathrm{ko}_S \cong \mathrm{B}^\infty_\mathbb{P} \Vect^\sym_S$, but the hermitian Bott element is difficult to access from this perspective.
\end{remark}

\begin{remark}
    An interesting variant of motivic cohomology is \emph{Milnor--Witt motivic cohomology}, which is represented by a motivic spectrum denoted $\mathrm{H}\tilde{\mathbb{Z}}_S$.
    This is a version of motivic cohomology in which algebraic cycles are equipped with symmetric bilinear forms.
    In particular, it provides a natural home for the \emph{Euler class} of vector bundles.
    See \cite{BCDJOe_MWMotives} for more on $\mathrm{H}\tilde{\mathbb{Z}}_S$.

    It is shown in \cite[Section~10]{BEM} that $\mathrm{H}\tilde{\mathbb{Z}}_S \cong \underline{\pi}_0^\mathrm{eff}(\tilde{\mathrm{s}}_0 \mathrm{ko}_S)$, at least when $2 \in \mathcal{O}(S)^\times$.
    See \cite{Bachmann_GeneralizedSlices} for details on this notation.
    In particular, combining our Theorem~\ref{theorem:main} with results from \cite[Section~13]{BachmannHoyois_Norms}, we conclude that $\mathrm{H}\tilde{\mathbb{Z}}_S$ also admits a normed algebra structure, and the canonical map $\mathrm{ko}_S \to \mathrm{H}\tilde{\mathbb{Z}}_S$ is a map of normed algebras.
\end{remark}

\printbibliography

\end{document}